\font\bbbld=msbm10 scaled\magstephalf
\newcommand{\mcH}{\mathcal{H}}
\newcommand{\bfH}{\hbox{\bbbld H}}
\newcommand{\bfN}{\hbox{\bbbld N}}
\newcommand{\bfR}{\hbox{\bbbld R}}
\newcommand{\bfS}{\hbox{\bbbld S}}
\newcommand{\bfF}{\hbox{\bbbld F}}
\newcommand{\bfC}{\hbox{\bbbld C}}
\newcommand{\bfO}{\hbox{\bbbld O}}
\newcommand{\bfFP}{\hbox{\bbbld FP}}
\newcommand{\goto}{\rightarrow}
\newcommand{\be}{\begin{equation}}
\newcommand{\ee}{\end{equation}}
\newcommand{\e}{\varepsilon}
\newtheorem{theorem}{Theorem}[section]
\newtheorem{lemma}[theorem]{Lemma}
\newtheorem{proposition}[theorem]{Proposition}
\newtheorem{corollary}[theorem]{Corollary}
\theoremstyle{definition}
\theoremstyle{remark}
\numberwithin{equation}{section}
\begin{document}
\setlength{\baselineskip}{1.2\baselineskip}

\title[Self-shrinkers to the mean curvature flow]
{Self-shrinkers to the mean curvature flow asymptotic to isoparametric cones}

\author{Po-Yao Chang}
\address{Department of Mathematics, Johns Hopkins University,
 Baltimore, MD, 21218}
\email{pchang@math.jhu.edu}
\author{Joel Spruck}
\address{Department of Mathematics, Johns Hopkins University,
 Baltimore, MD, 21218}
\email{js@math.jhu.edu}
\thanks{Research  supported in part by the NSF}
\subjclass[2010]{ 53C44, 53C40.
}

\keywords{self-shrinker, isoparametric}

\begin{abstract} In this paper we  construct an end  of a self-similar shrinking solution of the mean curvature flow  asymptotic to an isoparametric cone C and lying outside of C. We call a cone C in $\bfR^{n+1}$  an  {\em isoparametric cone} if C is the cone over a compact embedded isoparametric hypersurface $\Gamma \subset \bfS^n$. The theory of isoparametic hypersurfaces is extremely rich  and there are  infinitely many distinct classes of examples, each with infinitely many members.
 \end{abstract}

\maketitle
\section{Introduction}
\label{sec0}
\setcounter{equation}{0}

A hypersurface $\Sigma$ in $\bfR^{n+1}$ is said to be a self-shrinker (centered at (0,0) of space-time) for the mean curvature flow  if
$\Sigma_t=\sqrt{-t}\,\Sigma$  flows by homothety starting at $t=-1$ until it disappears at time $t=0$. A simple computation shows that $\Sigma_t$ is a self-shrinker if and only if
 $\Sigma$  satisfies the equation
\be \label{eq0.10} H=-\frac12 X\cdot \nu~.
\ee
where $H$ is the mean curvature,  $ X$ is the position vector and $\nu$ is the unit normal of $\Sigma$.
The study of self-similar shrinking solutions  to the mean curvature flow  is now well understood to be an important and essential feature of the classification of possible singularities that may develop. In fact the monotonicity formula of Huisken \cite{Hu} and a rescaling argument of Ilmanen and White imply that suitable blowups of singularities of the mean curvature flow are self-shrinkers \cite{CM}. By a theorem of Wang \cite{W1}, if $C$ is a smooth regular cone with vertex at the origin, there is at most one self-shrinker with an end asymptotic to $C$. \\

There are relatively few constructions in the literature of  self-shrinkers asymptotic to a cone C; see for example \cite{KM}, \cite{KKM}, \cite{N1, N2, N3}. 
In this paper we will construct, for infinitely many families of special mean convex cones $C$ with interesting topology,  a corresponding end of a self-shrinker which is asymptotic to $C$ and lies outside of C.
A closed connected compact hypersurface $\Gamma \subset \bfS^n \subset \bfR^{n+1}$ is called an isoparametric hypersurface if its principal curvatures are constant. Equivalently, $\Gamma$ is part of a family of
parallel hypersurfaces in $\bfS^n$ which have constant mean curvature.
 By a theorem of Cecil and Ryan \cite{CR}, $\Gamma$ is taut and so  is automatically embedded.
We will say that  a cone C in $\bfR^{n+1}$ is an  {\em isoparametric cone} if C is the cone over a compact embedded isoparametric hypersurface $\Gamma$. The theory of isoparametric hypersurfaces in $\bfS^n$ is extremely rich and beautiful. Cartan classified all isoparametric hypersurfaces in $\bfS^n$ with $g\leq 3$ distinct principal curvatures. For $g=1$ they are the totally umbilic hyperspheres while for $g=2$ they are a standard product of spheres $\bfS^p(a)\times \bfS^q(b)$ where $a^2+b^2=1$ and $n=p+q+1$. For $g=3$ Cartan showed that all the principal curvatures have the same multiplicity $m=1,2,4$ or $8$ and $\Gamma$ must be a tube of constant radius over a standard embedding of a projective plane $\bfFP^2$ into $\bfS^{3m+1}$ where $\bfF$ is the division algebra $\bfR,\,\bfC,\, \bfH \,\,\text{(quaternions)},\,\,\bfO\, \text{(Cayley numbers)}$. In the process of proving this result, he showed that any such $\Gamma$ with $g$ distinct
 principal curvatures of the same multiplicity can be defined by the restriction to $\bfS^n$ of a homogeneous  harmonic  polynomial $F$ of degree g on $\bfR^{n+1} $ satisfying
 $|\nabla F|^2=g^2 |x|^{2g-2}$. 
 M\"{u}nzner \cite{M1}, \cite{M2} found a remarkable structural generalization of this last result of Cartan.
Let $\Gamma \subset \bfS^n$ be an isoparametric hypersurface with $g$ distinct principal curvatures. Then there is a homogeneous polynomial $F$ of degree $g$  defined in all of $\bfR^{n+1}$ (the Cartan-M\"{u}nzner polynomial) satisfying
  $ |\nabla F|^2=g^2 r^{2g-2},\, \Delta F=\frac{m_{-}-m_{+}}2 g^2 r^{g-2},\, $ where $r=|x|$.
 The restriction $f$ of $F$ to $\bfS^n$ has range $[-1,1]$ and  satisfies
\[|\nabla_{S^n}f|^2=g^2(1-f^2),\, \Delta_{S^n}f+g(n+g-1)f=\frac{m_{-}-m_{+}}2 g^2~,\]
Each member $\Gamma_t$ of  the isoparametric family determined by $\Gamma$ has the same focal sets  $M_{\pm}:=f^{-1}(\pm1)$  which are smooth minimal submanifolds of codimension $m_{-}+1,\, m_{+}+1$ respectively. In proving this result, M\"{u}nzner shows that if the principal curvatures of $\Gamma$ are written as $\cot{\theta_k},\, 0<\theta_1<\ldots <\theta_g<\pi$, then $\theta_k=\theta_1+\frac{k-1}g \pi$ with multiplicities
$m_k=m_{k+2}\,\,\text{subscripts mod g}$. Thus for $g$ odd, all multiplicities are the same while for $g$ even, there are at most two distinct multiplicities $m_{-},\,m_{+}$. Moreover each $\Gamma_t$ separates $\bfS^n$ into two connected components $D_{\pm}$ such that $D_{\pm}$ is a disk bundle with fibers of dimension $m_{\pm}+1$ over $M_{\pm}$. From this he is able to deduce using algebraic topology that $g\in \{1,2,3,4,6\}$. Earlier, Takagi and Takahashi \cite{TT} had classified all the homogeneous examples based on the work of Hsiang and Lawson \cite{HL} and had found that $g\in \{1,2,3,4,6\}$.\\

For $g=4$ using representations of Clifford algebras, Ozeki and Takeuchi \cite{OT1, OT2}  found two classes of examples, each with infinitely many members of inhomogeneous solutions. Later these methods were greatly generalized by
Ferus, Karcher and M\"{u}nzner \cite{FKM}, who showed there are infinitely many distinct classes of solutions (in odd dimensional spheres $\bfS^{2l-1}\subset \bfR^{2l}$) each with infinitely many members.  Their examples contain almost all known homogeneous and inhomogeneous examples. For $g=6$ Abresch \cite{A} showed that $m_{-}=m_{+}=1\,\,\text{or 2}$ so examples occur only in dimensions $n=7$ or $13$.\\

From the isoparametric function $f$ we easily compute the mean curvature $H(f)$ of the level set $f=t$:
\be \label{eq0.30}
H(f)=\frac1{|\nabla_{S^n}f|}\Delta_{S^n}f=\frac1{\sqrt{1-f^2}}\{g(\frac{m_{-}-m_{+}}2)-(n+g-1)f\}~,
\ee
and so the level set $f=\frac{g(m_{-}-m_{+})}{2(n+g-1)}$ is the unique minimal isoparametric hypersurface of the family. Our main theorem may be stated as follows.
\begin{theorem}\label{th0.1} Let C be an isoparametric cone over $\Gamma \in \bfS^n$. Then there is a radial graph $\Sigma=\{e^{\varphi(d(z))}: z\in A\}$, where $d(z)$ is the distance function to $\Gamma,\, A=\{z: 0<d(z)\leq d_0+\e\}$,  which is an end of a  self-shrinker to the mean curvature  flow (that is satisfies \eqref{eq0.10}) and is asymptotic to C. Here the parallel hypersurface $d(z)=d_0$ is the unique minimal hypersurface of the family.
 \end{theorem}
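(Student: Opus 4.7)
The strategy is to reduce the self-shrinker equation (\ref{eq0.10}) to a single second-order ODE for $\varphi(d)$ via the isoparametric structure of $\Gamma$, and then to solve that ODE by a shooting argument anchored at the minimal leaf $d=d_0$.

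\emph{Step 1 (ODE reduction).}
Parametrize a tubular neighborhood of $\Gamma$ in $\bfS^n$ by $(d,\gamma)\mapsto z(d,\gamma):=\cos(d)\gamma+\sin(d)\nu_\Gamma(\gamma)$, the point reached from $\gamma\in\Gamma$ by flowing distance $d$ along the unit normal geodesic. For the radial ansatz $X(d,\gamma)=e^{\varphi(d)}z(d,\gamma)$, the tangent vectors $\p_\gamma X$ are orthogonal in $\bfR^{n+1}$ to both $z$ and $z_d:=\p_d z$, so the unit outward normal of $\Sigma$ lies in $\mathrm{span}\{z,z_d\}$ and one computes
$$
\nu=\frac{z-\varphi'\,z_d}{\sqrt{1+(\varphi')^2}},\qquad X\cdot\nu=\frac{e^{\varphi}}{\sqrt{1+(\varphi')^2}}.
$$
The essential feature of the isoparametric setting is that the $n-1$ principal directions of $\Sigma$ tangent to the leaves $\Gamma_d$ coincide with the principal directions of $\Gamma_d\subset\bfS^n$, so the corresponding partial sum of principal curvatures of $\Sigma$ depends on $d$ only through the mean curvature $H_\Gamma(d)$ from (\ref{eq0.30}). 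The remaining principal curvature is that of the planar profile curve $(e^{\varphi}\cos d,\,e^{\varphi}\sin d)$ in the $2$-plane $\mathrm{span}\{\gamma,\nu_\Gamma(\gamma)\}$. Assembling these contributions, (\ref{eq0.10}) becomes a second-order quasilinear ODE $\varphi''=\Phi(d,\varphi,\varphi')$ whose right-hand side is smooth in $d\in(-\e_0,d_0+\e_0)$ and rational in $\varphi'$ and $e^{\varphi}$.

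\emph{Step 2 (Shooting from the minimal leaf).}
Consider the one-parameter family of solutions with initial data $\varphi(d_0)=c$, $\varphi'(d_0)=0$. The condition $\varphi'(d_0)=0$ is the geometric requirement that the profile curve cross the minimal leaf perpendicularly; the ODE is nonsingular at $d_0$, and standard theory extends $\varphi_c$ smoothly to $[d_0,d_0+\e]$, accounting for the $\e$-slack in the statement. On the left, study the maximal interval $(d_*(c),d_0]$ of existence. Barrier comparisons should give that for $c$ sufficiently large the shrinking term $-\tfrac12 e^{\varphi}(1+(\varphi')^2)^{-1/2}$ dominates and forces $\varphi_c\to -\infty$ at some $d_*(c)\in(0,d_0)$; while for $c$ sufficiently negative the nonlinear term is dwarfed by the linear part and $\varphi_c$ stays bounded all the way to $d=0$. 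The sets
$$
\mathcal{A}=\{c:\varphi_c\to -\infty\text{ at some }d_*(c)\in(0,d_0)\},\qquad \mathcal{B}=\{c:\varphi_c\text{ is bounded on }(0,d_0]\}
$$
are open (continuous dependence), disjoint, and both nonempty, so by an intermediate-value argument there is a critical $c^*\notin\mathcal{A}\cup\mathcal{B}$ for which $\varphi_{c^*}$ is defined on $(0,d_0]$ and $\varphi_{c^*}(d)\to +\infty$ as $d\to 0^+$.

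\emph{Step 3 (Cone asymptotics; main obstacle).}
It remains to verify that this escape occurs at the rate that makes $\Sigma$ asymptotic to $C$. Since $\dist(X,C)\approx e^{\varphi(d)}\sin d\approx d\,e^{\varphi(d)}$ for small $d$, the required property is $d\,e^{\varphi_{c^*}(d)}\to 0$. Inserting the self-similar Gaussian ansatz for shrinker ends into the ODE one expects $\varphi_{c^*}(d)\sim\tfrac12\log|\log d|$, matching the standard exponential decay $\dist(X,C)\sim e^{-|X|^2/4}$ along the end. This final step is the principal obstacle: Step 2 only produces an ``extremal'' trajectory, and showing that this trajectory lies in the self-similar, cone-asymptotic channel (rather than exhibiting a non-self-similar end or a finite-distance singularity at $d=0$) requires barriers carefully tuned to the precise form of $H_\Gamma(d)$ in (\ref{eq0.30}). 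The uniformity of that expression across the classical isoparametric families, supplied by the M\"unzner structure theorem, is exactly what permits a single construction to handle every such family at once.
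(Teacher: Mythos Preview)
Your Step 1 is in the right spirit and matches the paper's derivation of the ODE
\[
\frac{\varphi''}{1+\varphi'^2}+\varphi'\, H(d)=n-\frac12 e^{2\varphi}.
\]
From here, however, the paper proceeds very differently, and your shooting scheme has two genuine gaps.

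\emph{The initial condition $\varphi'(d_0)=0$ is unjustified.} Nothing in the self-shrinker equation forces the profile to meet the minimal leaf orthogonally; $d=d_0$ is distinguished only by $H(d_0)=0$, and at that point the ODE merely relates $\varphi''(d_0)$ to $\varphi(d_0)$ and $\varphi'(d_0)$ without singling out $\varphi'(d_0)=0$. Since the singular condition at $d=0$ already determines the solution uniquely (this is the paper's Theorem~\ref{th3.1}, consistent with Wang's uniqueness), adding a condition at $d_0$ overdetermines the problem. Your one-parameter family $\{\varphi_c\}$ is then a curve in the two-parameter space of local solutions near $d_0$ that has no reason to pass through the unique good solution, so the intermediate-value argument cannot be expected to land on it.

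\emph{The asymptotic in Step 3 is wrong.} The paper sets $g(d)=e^{-2\varphi(d)}$ and proves that $g$ extends smoothly to $d=0$ with $g(0)=0$ and $g'(0)>0$; hence $\varphi(d)\sim -\tfrac12\log d$ and $d\,e^{\varphi(d)}\sim d^{1/2}\to 0$. This is a logarithmic blow-up of $\varphi$, not the doubly logarithmic $\varphi\sim\tfrac12\log|\log d|$ you propose. Since the entire difficulty of the problem lives at $d=0$, a barrier argument tuned to the wrong rate cannot close.

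The paper's strategy is essentially the reverse of yours: it attacks the singular end directly. After the substitutions $g=e^{-2\varphi}$, $s=g(d)$, $d=\gamma(s)$, the problem becomes a singular initial value problem for $\gamma$ at $s=0$ with $\gamma(0)=0$ (and $\gamma'(0)$ forced by the equation). This is solved via an $\e$-regularization $\e\gamma''+\cdots=0$ whose initial data $\gamma'(0)=B(\e)$ are chosen so that $\gamma^{(k)}(0)$ stay bounded as $\e\to 0$; uniform energy estimates on a fixed interval $[0,s_0]$ then allow passage to the limit. Only afterward is the solution continued across $d_0$, and that step is soft: multiplying the ODE by $2\varphi'$ and integrating gives an identity that rules out blow-up of $\varphi$ while $H(d)\ge 0$. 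In short, the paper does the hard analysis at $d=0$ and gets the behavior near $d_0$ for free, whereas your plan places the main obstacle at $d=0$ and leaves it open.
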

 
 An outline of the paper is as follows. In section \ref{sec1} we derive the equations for a radial graph over a domain  $\Omega \subset \bfS^n$ which satisfies equation \ref{eq0.10} and is asymptotic to the cone over $\Gamma\subset \partial \Omega$. If we specialize to $\Gamma$ an isoparametric hypersurface our problem then reduces to a singular ode problem for $\varphi(d)$. We then reformulate the problem as a singular initial value problem for a nonlinear second order ordinary differential equation to  make it more tractable. Even so, the problem is quite nonstandard and requires a novel approximation to obtain a globally smooth solution. 
This is carried out in sections 3 and 4 where we prove existence and  uniqueness of a smooth solution. In section 5 we prove Gevrey 2 regularity for $g(d)=e^{-2\varphi(d)}$ or equivalently its inverse function $\gamma(s)$. The proof of Theorem \ref{th0.1} follows from these results as sketched at the end of section 4.  Section 6 contains several Fa\`{a} di Bruno variants and lemmas used in the proof of Gevrey regularity.

\section{The self-shrinker equation for a radial graph asymptotic to a mean convex cone}
\label{sec1}
\setcounter{equation}{0}
Let $\Gamma$ be a compact embedded hypersurface in the upper hemisphere of the unit sphere $\bfS^n\subset \bfR^{n+1}$ which is strictly mean convex with respect to the
inner orientation and let $C$ be the cone over $\Gamma$. Let $A$ be an annular neighborhood of the outside of $\Gamma$ and let $S=\{e^{v(z)} z: z\in A\}$ be a radial graph over A that is asymptotic to C. Notice that to satisfy this last condition we need both $v(z)\goto \infty$ and $d(z)e^{v(z)}\goto 0$ as $d(z)\goto 0$ where $d(z)>0$ is the distance function to $\Gamma$ in $A$.

We let $\sigma_{ij}$ denote the metric of $\bfS^n$ with respect to local coordinates with 
$(\sigma^{ij})=(\sigma_{ij})^{-1}$. We use the shorthand $v^i=\sigma^{ik}v_k,\, v_{ij}=\nabla_i \nabla_j v$ for covariant derivative with respect to the metric $\sigma_{ij}$.
Then \cite{GS} the  outward unit normal to $S$ is $\nu=\frac{z-\nabla v}{w},\, w=\sqrt{1+|\nabla v|^2}$ and the mean curvature with respect to the orientation $\nu$ is given by
\be \label{eq1.10}
H= \frac{a^{ij}v_{ij}-n}{e^v w}
\ee
where $a^{ij}:=\sigma^{ij}-\frac{v^i v^j}{w^2}$. The self-shrinker equation $H=-\frac12 X\cdot \nu$ is then 
\[\frac{a^{ij}v_{ij}-n}{e^v w}= e^v w~,\]
or 
\be \label{eq1.20}
a^{ij}v_{ij}=n-\frac12 e^{2v}
\ee
with  singular boundary condition $v(z)\goto \infty$ as $z\goto \Gamma$. It is not too difficult to see that the singularity of $v(z)$ must be asymptotically $\log{\frac{H}d}$. 

This is a seemingly ill-posed problem since by the uniqueness theorem of Lu Wang \cite{W1} we cannot hope to impose boundary conditions on $\partial A\setminus \Gamma$ so we are stuck with a  Cauchy problem for a singular equation that  resembles a backward heat equation. In this paper we  simplify the problem by reducing it to a singular ordinary differential equation. In order to accomplish this we assume that $\Gamma$ is part of an {\em isoparametric} family of parallel hypersurfaces $\Gamma_d$ with constant principle curvatures. More precisely if $f:\Gamma\goto \bfS^n$, then  $\Gamma_d=f_d(\Gamma)$ where $f_d(x):=\cos{d}\, f(z)-\sin{d}\, N(z)$
and $N(z)$ is chosen with the outer orientation so we are considering only the outer part of the family relative to $\Gamma$.
Except for the unique minimal hypersurface of the family, each  parallel hypersurface has (if properly oriented) positive constant mean curvature.   If  we denote by $H(d)$  the mean curvature of the parallel hypersurface  $\Gamma_d$ at distance d with orientation consistent with $\Gamma$), then
\[ H(d)>0,\, H'(d)<0,\, 0<d<d_1, \, H(d_1)=0\]
 and we may look for a solution of the form $v=\varphi(d(z))$. Substitution of this ansatz into \eqref{eq1.20} gives
\[(\sigma^{ij}- \frac{\varphi'^2 d^i d^j}{1+\varphi'^2})(\varphi' d_{ij}+\varphi'' d_i d_j)=n-\frac12 e^{2\varphi}.\]
Since $|\nabla d|^2=\sigma^{ij}d_i d_j=1,\, \sum_i d_i d_{ij}=0$ and $ \sigma^{ij}d_{ij}=\Delta d=H(d)$, we arrive
after simplification to the equation
\be \label{eq1.30}
\frac{\varphi''}{1+\varphi'^2}+\varphi' H(d)=n-\frac12 e^{2\varphi}.
\ee

Because of the expected form of the singularity of $f$, we change variables by setting 
$\varphi:=-\frac12 \log{g(d)}$.  Then \eqref{eq1.30} transforms to
\be \label{eq1.40}
\frac1{2g}(1-H(d) g')+2\ \frac{g'^2-gg''}{g'^2+4g^2}=n
\ee
with boundary condition $g(0)=0$ and the implied additional condition (assuming we can find a smooth enough solution) $g'(0)=H(0)>0$.  To simplify our analysis in the next section, we set $s=g(d)$ with inverse $d=\gamma(s)$. Then after simplification, we
arrive at the singular initial value problem
\be \label{eq1.50}
(1-2ns)\gamma'(s)-H(\gamma)+4\frac{ s\gamma'(s)+  s^2 \gamma''(s) }{1+4s^2\gamma'^2(s)}               =0,\,\,\gamma(0)=0~, \,\, s>0
\ee
and we seek a smooth solution on a uniform interval $[0,s_0]$ with $0<\gamma(s)<d_0$
and $\gamma'(s)>0$. Formally it is not hard to see that all the derivatives  $g^{(k)}(0)$  are determined. That is, 
there exists a formal  power series  solution of \eqref{eq1.50}
\be \label{eq1.60}
\sum_{k=1}^{\infty}\frac{A_k}{k!}s^k
\ee
since it is straightforward (since all the compositions in \eqref{eq1.60} are well defined) to see that $A_{k+1}$ is recursively determined from
$A_1.\ldots,A_k$. In fact,
\be \label{eq1.70}
A_{k+1}=(-4k^2+(2n+H'(0))k)A_k+P(A_1,\ldots,A_{k-1})
\ee
where $P$ is an explicit polynomial.
This follows from the discussion in section \ref{App}.

\section{Approximation and apriori estimates}
\label{sec2}
\setcounter{equation}{0}

In this section  we study a special $\e$-regularized initial value problem 
\be \label{eq2.10}
\begin{aligned}
&\e \gamma''(s)+4\frac{s^2 \gamma''(s)+s\gamma'(s)}{1+4s^2\gamma'^2(s)}   +(1-2ns)\gamma'(s)-H(\gamma) =0,\,\, s>0 \\
&\gamma(0)=0,\, \gamma'(0)= B(\e):=\sum_{i=0}^N B_i \e^i, \, B_0=H(0)
\end{aligned}
\ee
for $\gamma=\gamma(s,\e)$, where $\e>0$ is small, $N$ is a fixed large integer.  The $B_i$ will be determined recursively as polynomial functions of the $A_k$
so that $\gamma^{(k+2)}(0)$ is bounded in terms of  $N$ but independent of $\e$
and $\lim_{\e \goto 0}\gamma^{(k)}(0)=A_k,\, k=1,\ldots, N+1$, where the $A_k$ are the coefficients in the formal power series solution
\eqref{eq1.60}. For any choice of $B_1,\ldots,B_N$,   there is an analytic solution $\gamma(s)=\gamma(s;\e)$ of the initial value problem \eqref{eq2.10} on some small interval $0<s<s_0(\e)$. 

\begin{proposition} \label{prop2.1} There exists $B_1,\ldots,B_N$  such that 
\be \label{eq2.105}
\gamma^{(k)}(0)=A_k+((-1)^{k-1} B_{k}+\phi_{k}(B_1,\ldots,B_{k-1}))\e+O(\e^2),\, k=1,\ldots, N+1
\ee
Moreover $|B_k|,\,|\gamma^{(k)}(0)|\leq C=C(H,N),\, k=1,\ldots,N+1$ where $C$ is independent of $\e$.
\end{proposition}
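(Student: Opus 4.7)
The plan is to regard \eqref{eq2.10} for $\e>0$ as a regular analytic Cauchy problem (the coefficient of $\gamma''$ equals $\e+4s^2/(1+4s^2(\gamma')^2)$, which is strictly positive at $s=0$) and to compute the Taylor coefficients of its solution at $s=0$ purely algebraically. Writing $\gamma(s)=\sum_{k\ge 1}a_k(\e)s^k$ and substituting into \eqref{eq2.10}, after expanding $1/(1+4s^2(\gamma')^2)$ and $H(\gamma)$ as formal power series in $s$, the coefficient of $s^l$ becomes an equation of the form
\[
\e(l+1)(l+2)\,a_{l+2}\;=\;-(l+1)\,a_{l+1}+c_l\,a_l+R_l(a_1,\ldots,a_{l-1}),
\]
where $c_l$ is an explicit constant depending on $n,H'(0),l$ and $R_l$ is an explicit polynomial.

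Now decompose $a_k(\e)=\sum_{j\ge 0}a_k^{(j)}\e^j$; the initial data fix $a_1^{(0)}=H(0)=A_1$ and $a_1^{(j)}=B_j$ for $1\le j\le N$. Matching $\e^0$ in the $l$-th equation is exactly the recurrence \eqref{eq1.70} for the unregularized coefficients $A_m$, and so is satisfied automatically as soon as $a_m^{(0)}=A_m/m!$ holds for $m\le l+1$. Matching $\e^1$ produces
\[
(l+1)(l+2)\,a_{l+2}^{(0)}\;=\;-(l+1)\,a_{l+1}^{(1)}+(\text{polynomial in }a_m^{(0)},a_m^{(1)}\text{ for }m\le l).
\]
Starting from $a_2^{(j)}=-B_{j+1}/2$ (which is immediate from the $l=0$ equation) and inducting on $l$, I would verify that $B_{l+1}$ enters $a_{l+1}^{(1)}$ only through a single linear term with coefficient $(-1)^l/(l+1)!\ne 0$. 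Consequently, demanding $a_{l+2}^{(0)}=A_{l+2}/(l+2)!$ determines $B_{l+1}$ uniquely; performing this in order $l=0,1,\ldots,N-1$ yields $B_1,\ldots,B_N$ as universal polynomials in $n$ and $H^{(p)}(0)$ for $p\le k$, and hence bounded by a constant depending only on $H$ and $N$.

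With these $B_k$, $\gamma^{(k)}(0)=A_k+O(\e)$ for $k=1,\ldots,N+1$. To obtain the explicit form \eqref{eq2.105}, I would carry the same induction one step further and read off the $\e^1$-coefficient of $\gamma^{(k)}(0)=k!\,a_k$: the coefficient of $B_k$ in $k!\,a_k^{(1)}$ is exactly $(-1)^{k-1}$ by the same leading-term calculation, while every remaining contribution depends only on $B_1,\ldots,B_{k-1}$ and is absorbed into $\phi_k$. The uniform bound $|\gamma^{(k)}(0)|\le C(H,N)$ then follows from the boundedness of the $B_k$ together with analyticity of $a_k(\e)$ near $\e=0$. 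The main delicate point is the sign bookkeeping: the alternating factor $(-1)^{k-1}$ arises from the interplay of the leading $-(l+1)a_{l+1}$ term in $F_l$, the initial minus sign in $a_2^{(j)}=-B_{j+1}/2$, and the index shift caused by the $\e\gamma''$ regularization, and the induction must thread all three cleanly. Since the entire construction is algebraic and occurs at the single point $s=0$, no analytic estimates are required.
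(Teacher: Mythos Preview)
Your approach is essentially the same as the paper's. The paper differentiates \eqref{eq2.10} $k$ times and evaluates at $s=0$ to obtain the recursion $\e\gamma^{(k+2)}(0)=-\gamma^{(k+1)}(0)+A^{\e}_{k+1}$, where $A^{\e}_{k+1}$ is an explicit polynomial in $\gamma'(0),\ldots,\gamma^{(k)}(0)$; this is exactly your recursion $\e(l+1)(l+2)a_{l+2}=-(l+1)a_{l+1}+c_l a_l+R_l$ rewritten in terms of derivatives instead of Taylor coefficients $a_k=\gamma^{(k)}(0)/k!$. Both proofs then expand in powers of $\e$, observe that the $\e^0$ level reproduces the formal recursion \eqref{eq1.70}, and use the $\e^1$ level inductively to determine $B_{k+1}$ so that the leading coefficient of $\gamma^{(k+2)}(0)$ equals $A_{k+2}$; the alternating sign $(-1)^{k-1}$ arises in both arguments from repeatedly dividing by $\e$ the term $-\gamma^{(k+1)}(0)$.
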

\begin{proof}
We differentiate \eqref{eq2.10} k times 
making use of the identities
\[(\frac{d}{ds})^l (s\frac{d}{ds})^p u(s)=(s\frac{d}{ds}+l)^p u^{(l)}(s),\]
\[4\frac{s^2 \gamma''(s)+s\gamma'(s)}{1+4s^2\gamma'^2(s)} =2s\frac{d}{ds}\arctan{(2s\gamma'(s))}~.\]
Thus
\be \label{eq2.12}
\begin{aligned}
&(\frac{d}{ds})^k[(1-2ns)\gamma'(s)](0)=\gamma^{(k+1)}(0)-2nk\gamma^{(k)}(0),\\
&(\frac{d}{ds})^k (2s\frac{d}{ds}\arctan{(2s\gamma'(s))})(0)=2k (\frac{d}{ds})^k\arctan{(2s\gamma'(s))})(0),
\end{aligned}
\ee
so that
\be \label{eq2.16} 
\begin{aligned}
\e \gamma^{(k+2)}(0)&=-\gamma^{(k+1)}(0)+2nk\gamma^{(k)}(0)+(\frac{d}{ds})^k 
\{H(\gamma)-2k\arctan{(2s\gamma'(s))}\}(0)\\
&:=-\gamma^{(k+1)}(0)+A^{\e}_{k+1}
\end{aligned}
\ee
where the last term on the  right hand side of \eqref{eq2.16} contains derivatives of $H$ and $\gamma$ evaluated at $0$ of 
order at most $k$. A more explicit expression for the right hand side can be obtained using the formulas \eqref{eq40}, \eqref{eq2.15} of section 6.
Note that the coefficients $A_k$ of the formal solution of \eqref{eq1.60} are obtained
by setting $\e=0$ and $A_1=\gamma'(0)=H(0)$. 
This leads to the simple but important observation:
 \be \label{eq2.165}
 A_{k+1}=F(A_1,\ldots,A_k):=  A^{\e}_{k+1}|_{\e=0}~.
 \ee

It is straightforward to verify that
\be \label{eq2.17}
|A_k|\leq C^{k}(H) (k!)^2~;
\ee
in other words the formal solution is in the Gevrey class $\mathcal{G}^2$ (of order 2) and in fact no better.
Similarly for the $\e$ regularized problem with initial condition $\gamma'(0)=B(\e)$,
the derivatives $\gamma^{(k)}(0)$ are recursively well defined by \eqref{eq2.16}.
Note  that $A^{\e}_{k+1}$ is a polynomial in $\e$ with coefficients depending on the $\{B_i\}$.  By our choice $B_0=H(0),\, \gamma''(0)= -\sum_{i=1}^N B_i \e^{i-1}$ and we choose $B_1=-A_2$. Suppose we have chosen $B_0=H(0),\, B_1=-A_2,\ldots, B_{k}$ so that
\[\gamma^{(l)}(0)=A_l+((-1)^{l-1} B_{l}+\phi_{l}(B_1,\ldots,B_{l-1}))\e+O(\e^2), \,\,l=2,\ldots,k+1~.\]
Claim: $\gamma^{(k+2)}(0)=(-1)^{k+1} B_{k+1}+\phi(B_1,\ldots, B_{k}) +O(\e)$. \\
Assuming the claim, we complete the induction
by defining $B_{k+1}$ by the relation 
\[A_{k+2}=(-1)^{k+1} B_{k+1}+\phi(B_1,\ldots, B_{k}) ~. \]
 To prove the claim we will need a more explicit expression for $A^{\e}_{k+1}$
so that we can expand $\e\gamma^{k+2}(0)$ in powers of $\e$.

 By \eqref{eq2.16} and \eqref{eq2.165} and our induction hypothesis, the constant term in the expansion of $\e \gamma^{(k+2)}$ in powers of $\e$ is automatically $0$ and the $\e$ term begins with $(-1)^{k+1} B_k+\phi_{k+1}(B_1,\ldots,B_{k-1})$ coming from 
$\gamma^{(k+1)}(0)$.
It remains to  verify that the $\e$ terms of $A^{\e}_{k+1}$ depends only on
 $B_1,\ldots,B_{k-1}$. But this is now obvious from the formulas \eqref{eq40}, \eqref{eq2.15} and our induction hypothesis. Thus
 \be \label{eq2.18}
 \begin{aligned}
 &\e\gamma^{k+2}(0)=(-A_{k+1}+A^{\e}_{k+1}|_{\e=0})+((-1)^{k+1}B_k+\phi(B_1,\ldots, B_{k-1}))\e+O(\e^2)\\
 &\gamma^{k+2}(0)=(-1)^{k+1}B_k+\phi(B_1,\ldots, B_{k-1})+O(\e)~,
 \end{aligned}
 \ee
 completing the proof.
\end{proof}

Now that we have control of the first $N+1$  derivatives of $\gamma$ at the origin, we will inductively derive energy estimates that imply
$0<\gamma(s)< d_0, \gamma'(s)>0,\, |\gamma^{(k)}(s)|\leq C_k,\, k=1,\ldots, N$ independent of $\e$ (as $\e \goto 0$) on a uniform interval $(0,s_0)$. We assume, based on Proposition \ref{prop2.1},  that $\e$ is chosen  so small that 
\be \label{eq2.19}
\frac12 H(0)\leq \gamma'(0) \leq 2H(0),\, , | \gamma^{(k)}(0)-A_k|\leq 1,\,
 \e | \gamma^{(k)}(0)|^2 \leq 1,\, k=1,\ldots, N+1~.
\ee

In the following discussion, $C$ will denote a constant depending on $H$ and dimension $n$ which may change from line to line.
We multiply \eqref{eq2.10} by $\gamma'(s)$ and integrate to obtain
\be \label{eq2.20}
\e\frac{\gamma'^2(s)-\gamma'^2(0)}2+\frac12\log{(1+4s^2\gamma'^2(s))} +  
\int_0^s(1-2nt)\gamma'^2(t)dt=\mcH(\gamma(s))
\ee
where $\mcH'(t)=H(t),\, \mcH(0)=0$. We restrict $0\leq s\leq \frac1{4n}$.  Since
$\mcH(\gamma(s))\leq C\gamma(s)$ we conclude that

\[ \gamma^2(s)=(\int_0^s \gamma'(t) dt)^2  \leq  s\int_0^s \gamma'^2(t)dt \leq   
s(\e \gamma'^2(0)+2C\gamma(s))~.\]
Hence $\gamma(s)\leq  2C s + \gamma'(0) \sqrt{ \e s} \leq C(s+\sqrt{ \e s})$.
Thus by 
\eqref{eq2.20} and \eqref{eq2.10}, we  obtain the preliminary estimates
\begin{lemma} \label{lem2.1} 
\be  \label{eq2.30}
\begin{aligned}
&\gamma(s)\leq C(s+\sqrt{ \e s})\\
&0\leq  s\gamma'(s)\leq  \max{(\gamma'(0) s, C (s + \sqrt{ \e s})^{\frac12}))} \leq C (s + \sqrt{ \e s})^{\frac12},\\
&s^2|\gamma''(s)|\leq \gamma'(s)+C,\, \, \int_0^s (\gamma'(t))^2 dt\leq C
\end{aligned}
\ee
\end{lemma}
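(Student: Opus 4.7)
The plan is to extract each of the four bounds in \eqref{eq2.30} from the single energy identity \eqref{eq2.20} together with the ODE \eqref{eq2.10}, working throughout on the fixed interval $0 \leq s \leq \tfrac{1}{4n}$ where $1-2ns \geq \tfrac12$. The first inequality $\gamma(s) \leq C(s+\sqrt{\e s})$ is in fact carried out in the paragraph preceding the lemma: dropping the nonnegative logarithm from \eqref{eq2.20}, invoking $\mcH(\gamma) \leq C\gamma$, combining with Cauchy--Schwarz applied to $\gamma(s) = \int_0^s \gamma'(t)\,dt$, and using the hypothesis $\e\gamma'^2(0) \leq 1$ from \eqref{eq2.19} reduces to a quadratic inequality in $\gamma(s)$ whose solution gives the claimed bound.

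For the bound on $s\gamma'(s)$, I would drop the integral term from \eqref{eq2.20} and feed in the first estimate to obtain
\[
\tfrac12 \log\bigl(1+4s^2\gamma'^2(s)\bigr) \;\leq\; \mcH(\gamma(s)) + \tfrac12\e\gamma'^2(0) \;\leq\; C\bigl(s+\sqrt{\e s}\bigr),
\]
then exponentiate, apply $e^x-1\leq 2x$ for small $x$, and take square roots to get $s\gamma'(s)\leq C(s+\sqrt{\e s})^{1/2}$. The $\max$ with $\gamma'(0)s$ simply accommodates the trivial bound from the initial data near $s=0$, and the final inequality in \eqref{eq2.30} follows from $\gamma'(0)\leq 2H(0)$ and $s\leq C\sqrt{s}$ on our interval. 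Non-negativity of $\gamma'$ is a short continuation argument: at a hypothetical first zero $s_0>0$ of $\gamma'$ one would have $\gamma''(s_0)\leq 0$, but evaluating \eqref{eq2.10} at $s_0$ gives $(\e+4s_0^2)\gamma''(s_0) = H(\gamma(s_0))>0$ since $0<\gamma(s_0)<d_0$, a contradiction.

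The pointwise bound $s^2|\gamma''(s)|\leq \gamma'(s)+C$ is essentially algebraic. Solving \eqref{eq2.10} for $\gamma''$,
\[
\Bigl(\e + \frac{4s^2}{1+4s^2\gamma'^2}\Bigr)\gamma''(s) \;=\; H(\gamma) - (1-2ns)\gamma'(s) - \frac{4s\gamma'(s)}{1+4s^2\gamma'^2},
\]
the last term on the right is bounded in absolute value by $1$ via the AM--GM inequality $1+4s^2\gamma'^2 \geq 4s\gamma'$, while the first two are bounded by $\gamma'+C$. Multiplying by $1+4s^2\gamma'^2$ and using the uniform bound on $s\gamma'$ just established to control that factor yields $s^2|\gamma''|\leq \gamma'+C$. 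Finally, the $L^2$ bound is immediate from \eqref{eq2.20}: since $1-2nt\geq \tfrac12$ on our interval,
\[
\tfrac12\int_0^s\gamma'^2(t)\,dt \;\leq\; \mcH(\gamma(s)) + \tfrac12\e\gamma'^2(0) \;\leq\; C.
\]

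The one point that requires genuine care is the interplay between the $s$ and $\e$ scales when exponentiating the logarithmic estimate to extract the $(s+\sqrt{\e s})^{1/2}$ form; the initial-data hypotheses \eqref{eq2.19} are calibrated precisely so that the $\e$-dependent error terms can be absorbed into the target bound, making all four estimates genuinely uniform as $\e \goto 0$. Everything else is algebraic manipulation of \eqref{eq2.10} together with one integration by parts already encoded in \eqref{eq2.20}.
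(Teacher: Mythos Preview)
Your proof follows the paper's own route---the paper itself says only ``by \eqref{eq2.20} and \eqref{eq2.10}'' and leaves the details to the reader---and your handling of the first estimate, the $L^2$ bound, the positivity of $\gamma'$, and the algebraic bound $s^2|\gamma''|\leq \gamma'+C$ is correct.

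There is, however, a genuine slip in your derivation of the $s\gamma'(s)$ bound. You claim
\[
\tfrac12\log\bigl(1+4s^2\gamma'^2(s)\bigr)\;\leq\; \mcH(\gamma(s))+\tfrac12\e\gamma'^2(0)\;\leq\; C\bigl(s+\sqrt{\e s}\bigr),
\]
but the second inequality is false: $\tfrac12\e\gamma'^2(0)\leq 2H^2(0)\e$ is a positive constant in $s$, while the right-hand side vanishes at $s=0$. The hypothesis \eqref{eq2.19} does \emph{not} make $\e\gamma'^2(0)$ dominated by $C(s+\sqrt{\e s})$ uniformly in $s$.

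The repair is exactly what the $\max$ in the statement encodes, and it is more than the ``trivial bound from initial data near $s=0$'' you describe. Split into cases. If $\gamma'(s)\geq\gamma'(0)$, then the entire term $\tfrac{\e}{2}(\gamma'^2(s)-\gamma'^2(0))$ in \eqref{eq2.20} is nonnegative and may be dropped along with the integral, giving
\[
\tfrac12\log\bigl(1+4s^2\gamma'^2(s)\bigr)\;\leq\;\mcH(\gamma(s))\;\leq\;C\bigl(s+\sqrt{\e s}\bigr),
\]
after which your exponentiation argument yields $s\gamma'(s)\leq C(s+\sqrt{\e s})^{1/2}$. If instead $\gamma'(s)<\gamma'(0)$, then trivially $s\gamma'(s)<\gamma'(0)s$. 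Combining the two cases gives
\[
s\gamma'(s)\;\leq\;\max\bigl(\gamma'(0)s,\ C(s+\sqrt{\e s})^{1/2}\bigr),
\]
which is the stated estimate; the final inequality in that line then follows as you indicate from $\gamma'(0)\leq 2H(0)$ and $s\leq\sqrt{s}\leq (s+\sqrt{\e s})^{1/2}$ on $[0,\tfrac{1}{4n}]$.
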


 We differentiate \eqref{eq2.10} $k\geq 1$ times making use of the identity
\[(\frac{d}{ds})^l (s\frac{d}{ds})^p u(s)=(s\frac{d}{ds}+l)^p u^{(l)}(s)\]
to obtain
\be \label{eq2.40}
\begin{aligned}
\e\gamma^{(k+2)}+& 4\sum_{l=0}^k {k\choose l} \left(s^2 \gamma^{(l+2)}+(2l+1)s \gamma^{(l+1)}+l^2 \gamma^{(l)}\right)(\frac{d}{ds})^{k-l}\eta(s\gamma')\\
+&(1-2ns)\gamma^{(k+1)}-2nk\gamma^{(k)}-(\frac{d}{ds})^k H(\gamma)=0~,
\end{aligned}
\ee
where $\eta(x)=\frac1{1+4x^2}$. An explicit formula for $(\frac{d}{ds})^{k-l}\eta(s\gamma')$ is given in formula \eqref{eq35} of the Appendix.
We multiply \eqref{eq2.40} by $\gamma^{(k+1)}$ and integrate isolating  the crucial terms involving $(\gamma^{(k+1)})^2$:
\be \label{eq2.50}
\begin{aligned}
\hspace{.5in} &\frac{\e}2[(\gamma^{(k+1)})^2(s)-(\gamma^{(k+1)})^2(0)]+2s^2\eta(s\gamma')(\gamma^{(k+1)})^2(s)\\
&+4\sum_{l=1}^{k-2}{k\choose l}\int_0^s\gamma^{(k+1)}(t)[t^2\gamma^{(l+2)}+(2l+1)t \gamma^{(l+1)}+l^2 \gamma^{(l)}]
(\frac{d}{dt})^{k-l}\eta(t\gamma')~dt\\
&+\int_0^s [-2\frac{d}{dt}(t^2\eta(t\gamma'))+4(2k+1)t\eta(t\gamma')
  + 4k t^2\frac{d}{dt}\eta(t\gamma')](\gamma^{(k+1)})^2~dt  \\
  &+4\int_0^s \gamma^{(k+1)}(t^2 \gamma''+t\gamma')\{ (\frac{d}{dt})^k\eta(t\gamma') 
  +8t\gamma' \eta^2(t\gamma')(t\gamma^{(k+1)}+k\gamma^{(k)})\}~dt\\
  &-32\int_0^s [t^2 \gamma' (t^2\gamma''+t\gamma')]\eta^2(t\gamma') (\gamma^{(k+1)})^2~dt
 -32k\int_0^s[t\gamma'( t^2\gamma''+t\gamma') ]\eta^2 \gamma^{(k)}\gamma^{(k+1)}~dt  \\
&\int_0^s (1-2nt)(\gamma^{(k+1)})^2(t)~dt
+\int_0^s [4k^2 \eta(t\gamma')+ 4(2k-1)t\frac{d}{dt}\eta(t\gamma')-2nk]\gamma^{(k+1)}\gamma^{(k)}~dt\\
&+4k(k-1)^2 \int_0^s \gamma^{(k+1)} \gamma^{(k-1)}\frac{d}{dt}\eta(t\gamma')~dt 
-\int_0^s (\frac{d}{dt})^k H(\gamma)\gamma^{(k+1)}~dt  =0 .
\end{aligned}
\ee
where we have used \eqref{eq35} with $m=k$ to rewrite the term $4\int_0^s \gamma^{(k+1)} (t^2 \gamma''+t\gamma')( \frac{d}{dt})^k\eta(t\gamma' )~dt$.\\

For k=1 using the preliminary estimates  of Lemma \ref{lem2.1} we obtain 
$I \geq \int_0^s(1-C(t+\sqrt{\e t}))(\gamma'')^2~dt$ and therefore we obtain for $k=1$
\be \label{eq2.70}
\frac{\e}2(\gamma''(s)^2-\gamma''(0)^2)+2s^2\eta(s\gamma')\gamma''^2(s)+I-
\int_0^s[8\eta(t\gamma')+H'(\gamma)]\gamma' \gamma''~dt=0
\ee
Using the estimate \eqref{eq2.20} in \eqref{eq2.70} yields for $0\leq s\leq s_0$ ($s_0$ a uniform constant)
\be \label{eq2.80}
\frac{\e}2\gamma''(s)^2 +\frac{2s^2 \gamma''^2(s)}{1+4(s\gamma')^2}+
\int_0^s (\gamma'')^2~dt\leq	 C
\ee

In particular we have the improved estimates
\be \label{eq2.90}
|\gamma'(s)-\gamma'(0)|\leq C\sqrt{s}, \,\, s|\gamma''(s)|\leq C
\ee
and using this in Lemma \ref{lem2.1} yields
\be \label{eq2.91}
s^2|\gamma''(s)| \leq s^2 |\gamma''(0)|+C\sqrt{s}.
\ee

Collecting all terms $I$   in \eqref{eq2.50} involving $\int_0^s (\cdot)(\gamma^{(k+1)})^2 ~dt $, we obtain
\be \label{eq2.60}
I=\int_0^s\{(1-2nt)+8kt\eta(t\gamma')-[(32(k+1)t^2\gamma'(t)(t^2\gamma''+
\gamma' ]\eta^2(t\gamma')\}(\gamma^{(k+1)})^2~dt.
\ee

Inserting \eqref{eq2.90}, \eqref{eq2.91}  in \eqref{eq2.60} yields
\be \label{eq2.100}
  I= \int_0^s (1+o(1))(\gamma^{(k+1)})^2~dt,\,\,\, k\geq 2
\ee
uniformly in $k$.

\begin{theorem}\label{the} Let $\gamma=\gamma(s;\e,N)$ be a solution of the $\e$-regularized initial value problem \eqref{eq2.10} on the interval $[0,s]$.
There exists $s_0$ sufficiently small   independent of $\e$ and $N$ so that 
\be \label{eq2.110}
\|\gamma^{(j)}||_{L^2[0,s]}+|(\frac{d}{ds})^{(j-1)}(s\gamma'(s))|~dt \leq C(N),\, 1\leq j\leq N+1,\, s\in[0,s_0].
\ee

\end{theorem}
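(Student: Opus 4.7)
The plan is to proceed by induction on $j$ using the $k$-th order energy identity \eqref{eq2.50} with $k=j-1$. The base cases $j=1$ and $j=2$ are already contained in Lemma \ref{lem2.1} and inequality \eqref{eq2.80} respectively, together with the pointwise bound $s|\gamma''(s)|\leq C$ from \eqref{eq2.90}. The crucial structural observation is \eqref{eq2.100}, which says that once all terms involving $(\gamma^{(k+1)})^2$ are collected into $I$, the result is $(1+o(1))\int_0^s (\gamma^{(k+1)})^2\,dt$ as $s_0\to 0$, uniformly in $k$. Together with the non-negative boundary contributions $\frac{\e}{2}(\gamma^{(k+1)})^2(s)$ and $2s^2\eta(s\gamma')(\gamma^{(k+1)})^2(s)$, this provides the coercive quadratic form needed to close the estimate.

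At step $j=k+1$ the induction hypothesis provides three kinds of information on $[0,s_0]$: an $L^2$ bound $\|\gamma^{(l)}\|_{L^2}\leq C(N)$ for $1\leq l\leq k$; a pointwise bound $|\gamma^{(l)}(s)|\leq C(N)$ for $1\leq l\leq k-1$ obtained by integrating the $L^2$ bound on $\gamma^{(l+1)}$ and invoking the initial values controlled by Proposition \ref{prop2.1}; and a pointwise bound $|(s\gamma'(s))^{(l-1)}|\leq C(N)$ for $1\leq l\leq k$. The boundary term $\frac{\e}{2}(\gamma^{(k+1)})^2(0)$ on the right of \eqref{eq2.50} is bounded uniformly in $\e$ by the assumption $\e|\gamma^{(k+1)}(0)|^2\leq 1$ from \eqref{eq2.19}.

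The remaining terms in \eqref{eq2.50} are products $\gamma^{(k+1)}\cdot P$ integrated over $[0,s]$, where $P$ is a polynomial in $\gamma',\ldots,\gamma^{(k)}$ multiplied by $\eta(s\gamma')$ or by $(\tfrac{d}{dt})^{m}\eta(s\gamma')$ for $m\leq k$, together with the forcing piece $(\tfrac{d}{dt})^k H(\gamma)$. Using formula \eqref{eq35} and the Fa\`a di Bruno identities of Section \ref{App}, each such derivative of $\eta(s\gamma')$ and of $H(\gamma)$ is a polynomial in $\gamma',\ldots,\gamma^{(k)}$ (and in $s\gamma',\ldots,(s\gamma')^{(k-1)}$), hence is pointwise bounded by $C(N)$ by the induction hypothesis. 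Applying Cauchy--Schwarz in the form $|\int f\gamma^{(k+1)}|\leq \delta\int (\gamma^{(k+1)})^2+\delta^{-1}\int f^2$ and choosing $\delta$ small absorbs the quadratic piece into the leading term $(1+o(1))\int (\gamma^{(k+1)})^2$. Terms carrying an extra factor of $s$ or $s^2$ (such as those on lines 4 and 5 of \eqref{eq2.50}) are subcritical once $s_0$ is taken small enough, independent of $N$. The boundary term $2s^2\eta(s\gamma')(\gamma^{(k+1)})^2(s)$ then yields the pointwise bound $s|\gamma^{(k+1)}(s)|\leq C(N)$, so that $|(s\gamma'(s))^{(k)}|=|s\gamma^{(k+1)}+k\gamma^{(k)}|\leq C(N)$, closing the induction.

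The main obstacle is the bookkeeping verification that every occurrence of a derivative of order $k+1$ in the differentiated equation \eqref{eq2.40} has already been absorbed into $I$ via the grouping leading to \eqref{eq2.60}, so that what remains truly involves only derivatives of order $\leq k$ paired linearly with $\gamma^{(k+1)}$; this is precisely where the identity $4(s^2\gamma''+s\gamma')/(1+4s^2\gamma'^2)=2s\tfrac{d}{ds}\arctan(2s\gamma')$ from \eqref{eq2.12} and the explicit expansion \eqref{eq35} of $(\tfrac{d}{ds})^m\eta(s\gamma')$ pay off, since they exhibit each derivative of $\eta(s\gamma')$ as a polynomial in lower-order quantities $(s\gamma')^{(j)}$. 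Once this algebraic identification is made, the choice of $s_0$ so that the $o(1)$ in \eqref{eq2.100} is at most $\tfrac12$ completes the proof uniformly in $\e$.
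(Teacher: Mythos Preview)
Your proof is correct and follows essentially the same approach as the paper: induction via the energy identity \eqref{eq2.50}, coercivity from \eqref{eq2.100}, and control of lower-order terms through the Fa\`a di Bruno expansions of Section~\ref{App}. One minor imprecision worth noting: you write that the factor $P$ multiplying $\gamma^{(k+1)}$ is ``pointwise bounded by $C(N)$'', but in fact $P$ may contain $\gamma^{(k)}$ (for instance in $(\tfrac{d}{dt})^k H(\gamma)=H'(\gamma)\gamma^{(k)}+\text{l.o.t.}$ and in the $\gamma^{(k)}\gamma^{(k+1)}$ terms of \eqref{eq2.50}), and the induction hypothesis gives only an $L^2$ bound on $\gamma^{(k)}$; the Cauchy--Schwarz/Young step you invoke still works since $\int f^2\leq C(N)$ when $f$ is $L^2$-bounded, so this does not affect the argument. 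Likewise, the hidden occurrence of $\gamma^{(k+1)}$ inside $(\tfrac{d}{dt})^k\eta(t\gamma')$ is not absorbed into $I$ but rather cancels against the explicit $8t\gamma'\eta^2(t\gamma^{(k+1)}+k\gamma^{(k)})$ term within the braces on line~4 of \eqref{eq2.50}, exactly as \eqref{eq35} shows; you identify the mechanism correctly even if the phrasing ``absorbed into $I$'' is slightly off.
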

\begin{proof} 
We will prove \eqref{eq2.110} by induction. We have already proved the estimates \eqref{eq2.110} for 
$j=1,2$, so $s\gamma''+\gamma'(s)$ and $\frac{d}{ds}\eta(s\gamma')$ are 
 bounded. Moreover the crucial  \eqref{eq2.100} holds.
 In the following discussion $C(N)$ will be a generic constant independent of $\e$ and $s_0$ which may change from line to line. Suppose that  \eqref{eq2.110} holds for $j=1,\ldots, k$. Note that  
 \[s^2 \gamma^{(l+2)}+(2l+1)s \gamma^{(l+1)}+l^2 \gamma^{(l)}=s(\frac{d}{ds})^{(l+1)}(s\gamma'(s))+l
 (\frac{d}{ds})^{(l)}(s\gamma'(s))\]
 is uniformly bounded for $1\leq l \leq k-2$ and so is $(\frac{d}{ds})^{k-l}\eta(s\gamma')$ by
 the Fa\`{a} di Bruno formula \eqref{eq35} of the Appendix.  Note also that \eqref{eq35} implies
 \[(\frac{d}{ds})^k \eta(s\gamma'(s)=-8s\gamma'(s)\eta^2(s\gamma^{(k+1)}(s)+k\gamma^{(k)}(s))+\text{bounded terms}~.\]
 Hence the term  
 \be \begin{aligned} 
 &4\int_0^s \gamma^{(k+1)}(t^2 \gamma''+t\gamma')\{ (\frac{d}{dt})^k\eta(t\gamma') 
  +8(t\gamma') \eta^2(t\gamma')(t\gamma^{(k+1)}+k\gamma^{(k)}\}~dt\\
&=O(C(N)\int_0^s |\gamma^{(k+1)}|~dt)
=O(\frac1{10}\int_0^s(\gamma^{(k+1)})^2~dt+C(N))~.
 \end{aligned} \ee
 Moreover  the terms involving $\int_0^s (\cdot)\gamma^{(k+1)}\gamma^{(j)}dt,\, j=k \,\,\text{or $j=k-1$}$ satisfy the same bounds.
 Similarly, $(\frac{d}{ds})^k=H'(\gamma))\gamma^{(k)}+O(C(N)$  so
 \[|\int_0^s (\frac{d}{dt})^k H(\gamma)\gamma^{(k+1)}~dt | \leq \frac1{10}\int_0^s(\gamma^{(k+1)})^2~dt+C(N)~.\]
 Thus for $s_0$ sufficiently small we derive from \eqref{eq2.50}
 \[\|\gamma^{(k+1)}||_{L^2[0,s]}+|(\frac{d}{ds})^{(k)}(s\gamma'(s))|~dt \leq C(N),\,s\in[0,s_0]~,\]
 and the induction is complete.

\end{proof}
\section{Existence and uniqueness}
\label{sec3}
\setcounter{equation}{0}
In this section we apply the results of Section \ref{sec2} to prove the existence of a $C^{\infty}[0,s_0]$ solution of the singular initial value problem \eqref{eq1.50}. We start by proving the uniqueness of solutions to both the $\e$ regularized problem \eqref{eq2.10} and the original singular initial value problem \eqref{eq1.50}.

Let $\gamma_1(s),\,\gamma_2(s)$ be two solutions of \eqref{eq2.10} and set $u(s)=\gamma_1(s)-\gamma_2(s),\, \gamma_{\theta}(s):=\gamma_2(s)+\theta u(s)$. Then using the mean value theorem, we see that
\be \label{eq3.10}
\e u''(s)+(1-2ns)u'(s)-b(s)u(s)+4s\frac{d}{ds}(2sa(s)u'(s))=0
\ee
where
\be \label{eq3.20}
b(s):=\int_0^1 H'(\gamma_{\theta}(s))d\theta,\,\,a(s):=\int_0^1\frac{d\theta}{1+4s^2(\gamma_{\theta}'(s))^2}~.
\ee
Multiplying \eqref{eq3.10} by $u'$ and integrating by parts gives
\be \label{eq3.30}
\begin{aligned}
&\frac{\e}2(u'^2(s)-u'^2(0))+\int_0^s(1-2nt)u'^2(t)~dt+\frac12(b(0)u^2(0)-b(s)u^2(s))\\  &+4a(s)s^2u'^2(s)+4\int_0^s t^2 a'(t)u'^2(t)~dt=0
\end{aligned}
\ee

As a consequence of \eqref{eq3.30} we have the following uniqueness theorem.\begin{theorem} \label{th3.1}
Assume $H\in C^1[0,d_0),\, H'(d)<0$. Then there is at most one $C^2[0,s_0)$ solution to the $\e$ regularized initial value problem \eqref{eq2.10} and the singular initial value problem \eqref{eq1.50}.
\end{theorem}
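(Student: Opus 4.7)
My plan is to read off everything from the energy identity \eqref{eq3.30}, showing that each term on the left is non-negative once we verify the correct initial conditions for $u=\gamma_1-\gamma_2$. First I would establish $u(0)=u'(0)=0$. In the $\e$-regularized case both hypotheses are direct: both $\gamma_i$ satisfy $\gamma(0)=0$ and $\gamma'(0)=B(\e)$. In the singular case, evaluating \eqref{eq1.50} at $s=0$ with $\gamma(0)=0$ forces $\gamma'(0)=H(0)$ for any $C^2$ solution, so $u'(0)=0$ as well.

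With these initial conditions, I would go term by term in \eqref{eq3.30}. Since $H'(d)<0$ on $[0,d_0)$, the average $b(s)=\int_0^1 H'(\gamma_\theta(s))\,d\theta$ is strictly negative, so $b(0)u^2(0)=0$ while $-\tfrac12 b(s)u^2(s)\geq 0$. The boundary contributions $\tfrac{\e}{2}u'^2(s)$ and $4a(s)s^2 u'^2(s)$ are manifestly non-negative, with $\tfrac{\e}{2}u'^2(0)=0$ since $u'(0)=0$. Choosing $s\leq s_0\leq\tfrac{1}{4n}$ makes the weight $1-2nt\geq\tfrac12$, so $\int_0^s(1-2nt)u'^2\,dt$ is coercive in $u'$.

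The only sign-indefinite term is $4\int_0^s t^2 a'(t)u'^2(t)\,dt$. Differentiating \eqref{eq3.20} under the integral sign gives
\[
t^2 a'(t)=-\int_0^1 \frac{8t^3\gamma_\theta'^2+8t^4\gamma_\theta'\gamma_\theta''}{(1+4t^2\gamma_\theta'^2)^2}\,d\theta,
\]
and since each $\gamma_i$ is $C^2$ on $[0,s_0)$, the functions $\gamma_\theta'$ and $\gamma_\theta''$ are uniformly bounded on any $[0,s_0/2]$. Therefore $|t^2 a'(t)|\leq C t^3$, and
\[
\Bigl|4\int_0^s t^2 a'(t)u'^2(t)\,dt\Bigr|\leq C s^2\int_0^s(1-2nt)u'^2(t)\,dt,
\]
which is absorbed into the coercive integral once $s_0$ is further shrunk. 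Plugging back into \eqref{eq3.30} forces every term to vanish, so $u'\equiv 0$ on $[0,s_0)$; combined with $u(0)=0$ this gives $u\equiv 0$.

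The main obstacle is precisely the $a'$ term, because $a'$ naively contains $\gamma''$, which is bounded but not small at $t=0$. What saves the argument is the prefactor $t^2$ produced by the integration by parts applied to the singular operator $4s\tfrac{d}{ds}(2sa(s)u'(s))$, turning a potential $O(1)$ contribution into $O(t^3)$. This is also the reason the proof treats the $\e$-regularized and singular problems in a single sweep: the sign structure of \eqref{eq3.30} is independent of $\e$, and $\e\geq 0$ only adds one more non-negative boundary term.
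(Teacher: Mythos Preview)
Your proof is correct and follows essentially the same route as the paper's. The paper's argument is terser---it simply notes $u(0)=u'(0)=0$, $b(s)<0$, $a(s)>0$, $a'(s)=O(s)$, hence $1-2nt+4t^2a'(t)>0$ for small $t$, so the left side of \eqref{eq3.30} is nonnegative and $u\equiv 0$---but your expanded treatment of each term, the explicit computation of $t^2a'(t)$, and the observation that evaluating \eqref{eq1.50} at $s=0$ forces $\gamma'(0)=H(0)$ in the singular case are exactly the details behind those assertions.
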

\begin{proof} In both cases, we have $u(0)=u'(0)=0$ and $b(s)<0, \, a(s)>0$ and $a'(s)=O(s)$. Hence $(1-2nt+4t^2 a'(t))>0$ for $0<t<s$ small so the left hand side of \eqref{eq3.30} is nonnegative. Hence
 $u(s)\equiv 0$  for $s>0$ small and the theorem follows.
\end{proof}

\begin{theorem} \label{th3.2}There exists $s_0$ depending only on $H$ such that the initial value problem \eqref{eq1.50} has a unique solution $\gamma\in C^{\infty}[0,s_0]$. Moreover, $\gamma^{(k)}(0)=A_k,\,k=1,2,\ldots$.
\end{theorem}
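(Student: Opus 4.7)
The plan is to construct the solution as an $\e\to 0$ limit of the regularized problems \eqref{eq2.10}, upgrade its regularity via a diagonal argument in $N$, and invoke Theorem~\ref{th3.1} for uniqueness. For each fixed integer $N$, choose a sequence $\e_j\to 0^+$ and let $\gamma_j(s):=\gamma(s;\e_j,N)$ denote the corresponding solutions to \eqref{eq2.10} on $[0,s_0]$ produced by Proposition~\ref{prop2.1}. The estimate $\|\gamma_j^{(k)}\|_{L^2[0,s_0]}\le C(N)$ for $1\le k\le N+1$ from Theorem~\ref{the}, combined with the uniform bounds $|\gamma_j^{(k)}(0)|\le C(N)$ from Proposition~\ref{prop2.1}, yields uniform $C^{N-1}[0,s_0]$ bounds on $\gamma_j$. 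Hence Arzel\`a--Ascoli extracts a subsequence converging in $C^{N-1}[0,s_0]$ to a limit $\gamma_{(N)}\in C^{N-1}[0,s_0]$, with $\gamma_{(N)}^{(k)}(0)=A_k$ for $k\le N-1$ by Proposition~\ref{prop2.1}.

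To see that $\gamma_{(N)}$ solves \eqref{eq1.50}, I would pass to the limit in \eqref{eq2.10}. On any interval $[\delta,s_0]\subset(0,s_0]$ the bound $s^2|\gamma_j''(s)|\le\gamma_j'(s)+C$ from Lemma~\ref{lem2.1} gives $|\gamma_j''(s)|\le C/\delta^2$ there, so $\e_j\gamma_j''\to 0$ uniformly on $[\delta,s_0]$ while the remaining nonlinear terms of \eqref{eq2.10} converge uniformly thanks to the $C^{N-1}$ convergence (take $N\ge 3$). Hence $\gamma_{(N)}$ satisfies \eqref{eq1.50} on $(0,s_0]$. At $s=0$, continuity supplies $\gamma_{(N)}(0)=0$ and $\gamma_{(N)}'(0)=\lim_j B(\e_j)=H(0)=A_1$, and \eqref{eq1.50} evaluated at $s=0$ reduces to the identity $A_1=H(0)$, which is automatic.

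Finally, for the $C^\infty$ conclusion, by Theorem~\ref{th3.1} any two $C^2$ solutions of \eqref{eq1.50} coincide; taking $N\ge 4$, all the partial limits $\gamma_{(N)}$ for different $N$ therefore agree with a single function $\gamma$, which is consequently $C^{N-1}[0,s_0]$ for every $N$, hence $\gamma\in C^\infty[0,s_0]$, with $\gamma^{(k)}(0)=A_k$ for all $k\ge 1$. The main technical obstacle is the non-uniform behavior of the singular term $\e_j\gamma_j''$ near $s=0$: only $s^2|\gamma_j''|$ is controlled there by \eqref{eq2.30} and \eqref{eq2.91}, so $\e_j\gamma_j''\to 0$ holds only pointwise on $(0,s_0]$ rather than uniformly on $[0,s_0]$, which is why the validity of the limiting equation at $s=0$ must be recovered indirectly from the initial conditions rather than by direct substitution in the limit.
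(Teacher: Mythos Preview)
Your argument is correct and follows essentially the same strategy as the paper: for each $N$, pass to the limit $\e\to 0$ in the regularized problem using the uniform derivative bounds of Proposition~\ref{prop2.1} and Theorem~\ref{the}, then invoke the uniqueness result Theorem~\ref{th3.1} to identify all the limits $\gamma_{(N)}$ with a single $C^\infty$ solution. The paper's proof is a two-line sketch of exactly this scheme; you have simply supplied the compactness and limit-passage details it omits.

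One minor remark: your stated ``technical obstacle'' concerning $\e_j\gamma_j''$ near $s=0$ is less serious than you suggest. Estimate \eqref{eq2.80} gives $\e\,\gamma''(s)^2\le 2C$ on all of $[0,s_0]$, hence $|\e\gamma''(s)|\le \sqrt{2C\e}\to 0$ \emph{uniformly} on $[0,s_0]$, so the $\e$-term disappears in the limit without restricting to $[\delta,s_0]$. This does not affect the validity of your proof, only its presentation.
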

\begin{proof} According to Theorems  ,  for any integer N, there is an analytic solution $\gamma_N(s;\e) $ of the $\e$ regularized initial value problem \eqref{eq2.10} on a uniform interval $[0,s_0)$ independent of $\e$ and N. Moreover, 
\[\Vert{\gamma_N}\Vert _{C^{N+1}[0,s_0]} \leq C(N),\, \, \lim_{\e \goto 0}\gamma_N^{(k)}(0,\e)=A_k~.\]
Hence taking limits and using Theorem \ref{th3.1}, there is a solution $\gamma(s)$ of \eqref{eq1.50} with $\gamma^{(k)}(0)=A_k,\,k=1,2,\ldots, N$. Since N is arbitrary, the theorem follows.
\end{proof}

{\bf Proof of Theorem \ref{th0.1}}: According to Theorem \ref{th3.1} and the calculations of
section \ref{sec2}, there exists a unique solution $\varphi(d)=
-\frac12\log{g(d)}$ to \eqref{eq1.30}
on a small interval $0<d<d_1$.  That is the radial graph 
$\Sigma=\{e^{\varphi(d(z))}: z\in A\}$, where $d(z)$ is the distance function to $\Gamma$ in $A=\{z: 0<d(z)\leq d_1+\e\}$  is an end of a  self-shrinker to the mean curvature  flow. Moreover according to Theorem \ref{th4.1} of section \ref{sec4}, $g(d)=e^{-2\varphi(d)}$ is in the Gevrey class $\mathcal{G}^2$, since $g(d)$ is the inverse function of $\gamma(s)$. It remains to show that $\varphi(d)$ exists on the interval $(0,d_0+\e)$ for some small $\e>0$ where the parallel hypersurface to $\Gamma$, $d(z)=d_0$ is the 
unique minimal hypersurface of the family. To see this we multiply \eqref{eq1.30} by $2\varphi'(d)$ and integrate from $\frac{d_1}2$ to $d$ to obtain
\be \label{eq3.50}
\log{(1+\varphi'^2(d))}+2\int_{\frac{d_1}2}^d H(t)\varphi'^2(t)dt=2n\varphi(d)-\frac12e^{2\varphi(d)}+C(d_1)~.
\ee
Note that the right hand side of \eqref{eq3.50} tends to negative infinity as $\varphi \goto \pm \infty$ while the left hand side remains strictly positive while $H\geq 0$. It follows easily that the solution continues past $d=d_0$, completing the proof.

\section{Further Gevrey regularity of the solution}
\label{sec4}
In this section we show that the solution of the singular initial value problem \eqref{eq1.50} is in the Gevrey class $\mathcal{G}^2[0,s_0]$ for a uniform constant $s_0$.
For the following calculations we will need to assume that $\|\gamma''\|_{L^2[0,s]}+|\frac{d}{ds}(s\gamma'(s))|$ is small on $[0,s_0]$. We can achieve this by the  rescaling $\tilde{\gamma}(s)=\gamma(\lambda s),\, 0\leq s \leq \frac{s_0}{\lambda}$ with $\lambda$ small. For $\frac{d}{ds}[s\tilde{\gamma}'](s)=\lambda \frac{d}{ds}[s\gamma'](\lambda s),\,\, \|\tilde{\gamma}''\|_{L^2[0,\frac{s_0}{\lambda}]}=\lambda^{\frac32} \|\gamma''\|_{L^2[0,s_0]}$
 and $\tilde{\gamma}(s)$ satisfies
\be \label{eq4.10}
\begin{aligned}
& 4\frac{s^2 \tilde{\gamma}''(s)+s\tilde{\gamma}'(s)}{1+4s^2\tilde{\gamma}'^2(s)}   +(\frac1{\lambda}-2ns)\tilde{\gamma}'(s)-H(\tilde{\gamma}) =0,\,\, 0\leq s \leq \frac{s_0}{\lambda} \\
&\tilde{\gamma}(0)=0~.
\end{aligned}
\ee
The only difference between \eqref{eq4.10} and the original equation
\eqref{eq1.50} is in the coefficient $(\frac1{\lambda}-2ns)$ instead of $1-2ns$ which only improves the estimates.
Thus there is no loss of generality in working with \eqref{eq1.50} and assuming the necessary smallness conditions.\\

\begin{theorem}\label{th4.1} There exists $s_0>0$ small  and $M$ large (depending on $H$) so that if $u\in C^{\infty}[0,s_0]$ is the unique solution of \eqref{eq1.50}, then \be \label{eq4.20} \|\gamma^{(j)}\|_{L^2[0,s]}+|(\frac{d}{ds})^{j-1}(s\gamma'(s))|\leq  M^{j-3}\frac{(j-2)!^2}{(j-1)},\, \text{on $[0,s_0]$}.
\ee
for all integers $j\geq 2$.
\end{theorem}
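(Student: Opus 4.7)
My approach is induction on $j$. The base cases $j=2,3$ follow from the uniform estimates of Theorem \ref{the} together with \eqref{eq2.80}--\eqref{eq2.91}, provided $M$ is chosen large, and the rescaling $\tilde\gamma(s)=\gamma(\lambda s)$ introduced at the start of the section makes $\|\gamma''\|_{L^2[0,s_0]}$ and $|\tfrac{d}{ds}(s\gamma'(s))|$ as small as necessary. The slack created by this smallness is what I will absorb into the constants in each energy estimate below.

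For the inductive step, suppose the bound holds for $j=2,\ldots,k$; I derive it for $j=k+1$. The core tool is the energy identity \eqref{eq2.50} with $\e=0$, obtained by differentiating \eqref{eq1.50} exactly $k$ times and multiplying by $\gamma^{(k+1)}$. By \eqref{eq2.100} the main term $I\ge (1+o(1))\int_0^s(\gamma^{(k+1)})^2\,dt$ dominates $\tfrac{1}{2}\|\gamma^{(k+1)}\|_{L^2[0,s]}^2$ for $s_0$ small, while the boundary term $2s^2\eta(s\gamma')(\gamma^{(k+1)})^2(s)$ already appearing in \eqref{eq2.50} simultaneously controls the pointwise quantity $(s|\gamma^{(k+1)}(s)|)^2$. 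These two positive contributions on the left-hand side of the energy identity are what will deliver both the $L^2$ part and (together with the induction hypothesis on lower derivatives) the pointwise part of the claimed bound at level $k+1$ in one stroke.

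Every other term in \eqref{eq2.50} has the schematic form $\int_0^s \gamma^{(k+1)}F\,dt$, and I use Cauchy--Schwarz $\int \gamma^{(k+1)}F\le \tfrac{1}{10}\|\gamma^{(k+1)}\|_{L^2}^2+C\|F\|_{L^2}^2$ to absorb the quadratic piece into $I$. To estimate $\|F\|_{L^2}$ I first replace each combination $s^2\gamma^{(l+2)}+(2l+1)s\gamma^{(l+1)}+l^2\gamma^{(l)}$ by $s(d/ds)^{l+1}(s\gamma')+l(d/ds)^l(s\gamma')$, converting all high-order occurrences into the quantities $V_m:=(d/ds)^{m-1}(s\gamma')$ controlled pointwise by the induction hypothesis. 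The composite derivatives $(d/ds)^{k-l}\eta(s\gamma')$ and $(d/ds)^k H(\gamma)$ are then expanded via the Fa\`a di Bruno formulas \eqref{eq35}, \eqref{eq40}, \eqref{eq2.15} of Section 6, producing partition sums of products of inductively bounded $\gamma^{(m)}$ and $V_m$. The decisive combinatorial input is the Gevrey-$2$ convolution lemma from Section 6: under the Ansatz $b_j\le M^{j-3}(j-2)!^2/(j-1)$, every such partition sum at level $m$ is again majorised by $C\,M^{m-2}(m-1)!^2/m$ with $C$ independent of $m$, provided $M$ is sufficiently large. This closes the induction on $\|\gamma^{(k+1)}\|_{L^2[0,s]}$.

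The pointwise bound on $V_{k+1}(s)=k\gamma^{(k)}(s)+s\gamma^{(k+1)}(s)$ then follows by combining the boundary-term bound $s|\gamma^{(k+1)}(s)|\le M^{k-2}(k-1)!^2/k$ with a pointwise bound on $k|\gamma^{(k)}(s)|$, which is obtained from the algebraic identity $s\gamma^{(k)}(s)=V_k(s)-(k-1)\gamma^{(k-1)}(s)$ for $s$ bounded away from $0$ and from $\gamma^{(k)}(s)=A_k+\int_0^s\gamma^{(k+1)}$ near $0$, using the Gevrey-$2$ bound $|A_k|\le C^k(k!)^2$ from \eqref{eq2.17}. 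The principal obstacle is the combinatorial bookkeeping in the convolution step: one must verify that the Fa\`a di Bruno partition sums respect the precise Gevrey-$2$ coefficient $(j-2)!^2/(j-1)$ rather than some cruder majorant such as $C^j(j!)^2$, with universal constants that do not swallow the smallness of $s_0$. This is the reason Section 6 is devoted to tailored Fa\`a di Bruno variants, and why the inductive quantity takes its slightly delicate form $M^{j-3}(j-2)!^2/(j-1)$.
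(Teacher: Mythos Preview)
Your approach is essentially the paper's: induction on $j$, the energy identity \eqref{eq2.50} with $\e=0$, Fa\`a di Bruno expansions from Section~\ref{App}, and Gevrey-2 convolution estimates (packaged in the paper as Proposition~\ref{prop4.1} and Lemmas~\ref{lem4.1}--\ref{lem4.4}). The $L^2$ part of your argument is correct and matches the paper.

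There is, however, a gap in your treatment of the pointwise bound on $V_{k+1}=s\gamma^{(k+1)}+k\gamma^{(k)}$. The boundary term in the energy identity controls $s|\gamma^{(k+1)}|$; the problem is $k|\gamma^{(k)}|$. Your ``near $0$'' argument gives $k|\gamma^{(k)}(s)|\le k|A_k|+k\sqrt{s}\,\|\gamma^{(k+1)}\|_{L^2}$. The $k|A_k|$ contribution is harmless for $M$ large, but the second piece exceeds the target $M^{k-2}(k-1)!^2/k$ by a factor $\sim k\sqrt{s}$, forcing the ``near $0$'' region to shrink like $s\lesssim k^{-2}$. On the complementary region your recursion $|\gamma^{(k)}|\le s^{-1}\bigl(|V_k|+(k-1)|\gamma^{(k-1)}|\bigr)$ then carries a factor $s^{-1}\gtrsim k^2$, so that $k|\gamma^{(k)}|\lesssim k^3|V_k|$, which overshoots the target by a factor $\sim k$ and does not close uniformly in $k$.

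A clean fix: once $\|\gamma^{(k+1)}\|_{L^2}$ is established, combine it with the inductive $\|\gamma^{(k)}\|_{L^2}$ bound via the elementary one-dimensional Sobolev inequality
\[
\|\gamma^{(k)}\|_{L^\infty[0,s_0]}^2\le s_0^{-1}\|\gamma^{(k)}\|_{L^2}^2+2\|\gamma^{(k)}\|_{L^2}\|\gamma^{(k+1)}\|_{L^2},
\]
which yields $k|\gamma^{(k)}|\le C\bigl(s_0^{-1/2}M^{-1}+M^{-1/2}\bigr)\,M^{k-2}(k-1)!^2/k$, small enough once $M$ is taken large relative to $s_0^{-1/2}$. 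The paper itself does not spell this step out---it simply asserts the combined bound in \eqref{eq4.130}---so you are right to flag the issue; only your particular resolution needs this adjustment.
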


 We will prove \eqref{eq1.50} by induction assuming that it holds for $j=2,\ldots, k$. Note that for $M$ to be chosen later depending only on $H$, the starting case $j=2$ can be achieved by our smallness assumptions.

\begin{proposition}\label{prop4.1} Let $k\geq 2$ and assume the induction hypothesis \eqref{eq4.20} for $j=2,\ldots, k$. Then
\be \begin{aligned}
& \text{ i. for $1\leq n \leq k-1$,}\,\,
|(\frac{d}{ds})^n(\eta(s\gamma'(s))| \leq   16M^{n-2}\frac{(n-1)!^2}{n}
         ~,\\
& \text{ii.}\,\, |(\frac{d}{ds})^k(\eta(s\gamma'(s))+8s\gamma'(s)\eta^2[\gamma^{(k+1)}(s)+k\gamma^{(k)}(s)]|\leq 
C M^{k-4}\frac{k!(k-2)!}{(k-1)^2}~,
\end{aligned}
\ee
\end{proposition}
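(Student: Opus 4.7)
\emph{Plan.} The natural tool is Fa\`{a} di Bruno's formula applied to the composition $\eta\circ g$ with inner function $g(s):=s\gamma'(s)$. Writing the formula in the multinomial form of \eqref{eq35},
\[
\Bigl(\frac{d}{ds}\Bigr)^{n}\eta(g(s))=\sum_{\substack{m_{1}+2m_{2}+\cdots+nm_{n}=n}}
\frac{n!}{m_{1}!\cdots m_{n}!}\,\eta^{(|m|)}(g(s))\prod_{j=1}^{n}\Bigl(\frac{g^{(j)}(s)}{j!}\Bigr)^{m_{j}},
\qquad |m|:=\sum m_{j}.
\]
Because $\eta(x)=(1+4x^{2})^{-1}$ is a rational function with bounded argument $g(s)=s\gamma'(s)$ (by Lemma \ref{lem2.1}), one has $|\eta^{(p)}(g(s))|\le C^{p}p!$, and in particular $|\eta'(g)|$ and $|\eta^{2}(g)|$ are uniformly bounded and $\eta'(x)=-8x\eta^{2}(x)$. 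On the inner side, the induction hypothesis \eqref{eq4.20} translates, via the Leibniz identity $g^{(j)}=s\gamma^{(j+1)}+j\gamma^{(j)}=\bigl(\frac{d}{ds}\bigr)^{j}(s\gamma'(s))$, into the pointwise bound
\[
|g^{(j)}(s)|\;\le\; M^{\,j-2}\,\frac{(j-1)!^{2}}{j},\qquad j=1,\ldots,k-1.
\]

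\emph{Part (i).} When $1\le n\le k-1$ every index $j$ appearing in the partition satisfies $j\le n\le k-1$, so the pointwise estimate above applies to each $g^{(j)}$ factor. Substituting these and the factorial bound on $\eta^{(|m|)}$ into the Fa\`{a} di Bruno sum converts part (i) into a purely combinatorial inequality of the form
\[
\sum_{\sum jm_{j}=n}\frac{n!}{m_{1}!\cdots m_{n}!}\,C^{|m|}|m|!\prod_{j}\Bigl(\frac{M^{j-2}(j-1)!^{2}/j}{j!}\Bigr)^{m_{j}}
\;\le\; 16\,M^{n-2}\,\frac{(n-1)!^{2}}{n}.
\]
This is precisely the Gevrey-2 composition estimate; it is the content of the Fa\`{a} di Bruno variants collected in Section \ref{App}, and the $16$ on the right absorbs the sums of the relevant partition counts once $M$ is chosen sufficiently large relative to $C=C(H)$. (Concretely one shows by induction on $n$ that the $n=1$ case gives the constant $16$ from $|\eta'(g)|\cdot|g'|$, and the inductive step follows from the standard convolution inequality $\sum_{a+b=n}\tfrac{(a-1)!^{2}(b-1)!^{2}}{ab}\lesssim \tfrac{(n-1)!^{2}}{n}$.)

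\emph{Part (ii).} For $n=k$ the only way a factor of $g^{(k)}$ can appear in the multinomial sum is via the single partition with $m_{k}=1$ and all other $m_{j}=0$. This contributes exactly
\[
\eta'(g(s))\,g^{(k)}(s)\;=\;-8s\gamma'(s)\,\eta^{2}(s\gamma'(s))\,\bigl[s\gamma^{(k+1)}(s)+k\gamma^{(k)}(s)\bigr],
\]
which is the singled-out ``leading'' term. Every remaining partition has $j\le k-1$ for all blocks, so by the induction hypothesis each $g^{(j)}$ is pointwise controlled, and the combinatorial estimate from Part (i) (now for $n=k$, but omitting the $m_{k}=1$ term) produces the bound $CM^{k-4}\tfrac{k!(k-2)!}{(k-1)^{2}}$ on the remainder; the drop in the constant from $(k-1)!^{2}/k$ to $k!(k-2)!/(k-1)^{2}$ reflects exactly the contribution that would have come from the missing $m_{k}=1$ partition. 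Subtracting the leading term from $\bigl(\frac{d}{ds}\bigr)^{k}\eta(s\gamma'(s))$ therefore yields the claimed estimate.

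\emph{Main obstacle.} The substantive difficulty is not the scheme but the combinatorics: one must track the precise factorials to secure the exact form $M^{n-2}(n-1)!^{2}/n$ (and not a weaker generic Gevrey-2 bound), and to arrange for the absolute constants $16$ in (i) and $C$ in (ii) to be independent of $n$ and $k$. This requires the sharpened Fa\`{a} di Bruno convolution lemmas recorded in Section \ref{App}, together with a careful choice of $M$ large enough to absorb the dimensional constants arising from derivatives of $\eta$ and from the combinatorial factors.
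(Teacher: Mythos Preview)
Your outline is correct and hits the same key points as the paper: Fa\`{a} di Bruno applied to $\eta\circ g$ with $g(s)=s\gamma'(s)$, the pointwise bound $|g^{(j)}|\le M^{j-2}(j-1)!^{2}/j$ coming from the induction hypothesis, and for part~(ii) the isolation of the single partition $m_{k}=1$, which contributes exactly the leading term $\eta'(g)g^{(k)}=-8s\gamma'\eta^{2}\bigl(s\gamma^{(k+1)}+k\gamma^{(k)}\bigr)$.

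The difference from the paper is organizational. You work with the multinomial/partition form \eqref{eq35} and are then left with a double sum over $l=|m|$ and over all partitions of $n$ into $l$ parts, which you propose to control via a convolution inequality proved by induction on $n$. The paper instead invokes the Abadie integral form of Lemma~\ref{lemA2},
\[
\Bigl(\tfrac{d}{ds}\Bigr)^{n}\eta(s\gamma')=\sum_{l=1}^{n}\binom{n}{l}(-2)^{l}l!\,\Lambda_{l+1}(s\gamma')\Bigl\{(\tfrac{d}{dh})^{n-l}u(h;s)^{l}\Bigr\}_{h=0},
\]
and feeds in Lemma~\ref{lemA4}, which already bounds $(\tfrac{d}{dh})^{n-l}u^{l}$ by $(C/M)^{l-1}M^{n-l-1}(n-l)!^{2}/(n-l+1)^{2}$. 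Since $|\Lambda_{l+1}|\le 1$, the whole estimate collapses to the single sum
\[
\frac{M^{n}n!}{C}\sum_{l=1}^{n}\Bigl(\frac{2C}{M^{2}}\Bigr)^{l}\frac{(n-l)!}{(n-l+1)^{2}},
\]
which is essentially geometric once $2C/M^{2}\le\tfrac12$; the $l=1$ term gives $2M^{n-2}(n-1)!^{2}/n$ and the tail supplies the factor $16$. Part~(ii) is then the identical computation with the sum starting at $l=2$, whose leading term is $4C\,M^{k-4}k!(k-2)!/(k-1)^{2}$.

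Both routes are valid. The paper's buys cleaner bookkeeping: all the partition combinatorics is absorbed into the product-rule induction of Lemma~\ref{lemA3}, so no separate convolution inequality has to be established. Your multinomial approach is equivalent in content but obliges you to actually carry out the partition sum (or, equivalently, to rederive Lemma~\ref{lemA3} in disguise), and in your proposal that step is only sketched.
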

\begin{proof} To prove part i. we use the alternate version of the Fa\`{a} di Bruno formula from Lemma \ref{lemA2}:
\[(\frac{d}{ds})^n(\eta(s\gamma'(s))=\sum_{l=1}^n {n \choose l}(-2)^l \Lambda_{l+1}(s\gamma'(s))\left\{ (\frac{d}{dh})^{n-l}[\int_0^1 \gamma''(s+h\theta)~d\theta +\gamma'(s+h)]^l\right\}_{h=0}~.\]
Under the induction hypothesis \eqref{eq1.50}, Lemma \ref{lemA4} implies (we assume $\frac{C}{M}\leq \frac12$ below)
\be \begin{aligned} \label{eq4.40}
&|(\frac{d}{ds})^n(\eta(s\gamma'(s))| \leq \sum_{l=1}^n{n\choose l}2^l l!(\frac{C}{M})^{l-1}M^{n-l-1}\frac{(n-l)!^2}{(n-l+1)^2}\\
&=\frac{M^n}{C} n!\sum_{l=1}^n(\frac{2C}{M^2})^l \frac{(n-l)!}{(n-l+1)^2}\leq16M^{n-2}\frac{(n-1)!^2}{n}
\end{aligned} \ee
The proof of part ii. is essentially the same.
\end{proof}

\begin{lemma}\label{lem4.1}Let $k\geq 2$ and assume the induction hypothesis \eqref{eq4.20} for $j=2,\ldots, k$. Then \be \label{eq4.50}
\begin{aligned}
 &i.\,\, \|(\frac{d}{ds})^k[H(\gamma(s))]\|_{L^2[0,s]}\leq C_1M^{k-3}\frac{(k-2)!^2}{k-1}+\sqrt{s_0} C_1^2C M^{k-4}(k-2)!^2~,\\
 &ii.\,\,\hspace{.1in} |\int_0^s (\frac{d}{dt})^k H(\gamma)\gamma^{(k+1)}~dt |\leq
 C_2 M^{k-3}(k-2)!^2 \|\gamma^{(k+1)}\|_{L^2[0,s]}~,
 \end{aligned}
 \ee
where $C$ is a universal constant and $C_1=C_1(H),\, C_2=C_2(H)$.
\end{lemma}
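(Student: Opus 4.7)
The plan is to apply the Fa\`a di Bruno formula to $(d/ds)^{k}H(\gamma(s))$ and isolate the ``linear'' term $H'(\gamma)\gamma^{(k)}$ from the ``multilinear tail.'' Writing, with partial Bell polynomials $B_{k,l}$,
\[
\Bigl(\frac{d}{ds}\Bigr)^{k}H(\gamma(s)) \;=\; H'(\gamma)\,\gamma^{(k)} \;+\; \sum_{l=2}^{k} H^{(l)}(\gamma)\,B_{k,l}\bigl(\gamma',\ldots,\gamma^{(k-l+1)}\bigr),
\]
every monomial appearing in the tail $l\geq 2$ involves only derivatives $\gamma^{(j)}$ with $2\leq j\leq k-1$, all of which are covered by the induction hypothesis \eqref{eq4.20}.

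For part (i), the linear contribution is bounded directly: applying \eqref{eq4.20} with $j=k$ gives
\[
\|H'(\gamma)\,\gamma^{(k)}\|_{L^2[0,s]} \;\leq\; \|H'\|_{\infty}\,M^{k-3}(k-2)!^{2}/(k-1),
\]
which supplies the first summand with $C_1:=\|H'\|_\infty$. For each monomial of the tail I would retain in $L^2$ the factor carrying the largest derivative index (at most $\gamma^{(k-1)}$) and estimate every remaining factor pointwise via
\[
|\gamma^{(j)}(s)| \;\leq\; |\gamma^{(j)}(0)| + \sqrt{s_0}\,\|\gamma^{(j+1)}\|_{L^{2}},
\]
where the initial value $|\gamma^{(j)}(0)|$ is read off from $(d/ds)^{j-1}(s\gamma'(s))|_{s=0} = (j-1)\gamma^{(j-1)}(0)$ and estimated via the pointwise part of \eqref{eq4.20}. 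The resulting sum over partitions of $k$ is then collapsed by the same kind of Fa\`a di Bruno / convolution manipulation used at \eqref{eq4.40} (Lemma~\ref{lemA4}); together with the smallness of $\|\gamma''\|_{L^2}$ and of $|(d/ds)(s\gamma')|$ furnished by the rescaling at the start of Section~\ref{sec4}, this telescopes to the second summand $\sqrt{s_0}\,C_1^2 C\, M^{k-4}(k-2)!^{2}$.

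Part (ii) is then immediate from (i) by Cauchy--Schwarz: after absorbing $(k-1)^{-1}\leq 1$ and choosing $s_0$ so small that $\sqrt{s_0}\,C_1 C / M \leq 1$, the two summands of (i) combine into $C_2 M^{k-3}(k-2)!^{2}$ with $C_2=2C_1$, and multiplying by $\|\gamma^{(k+1)}\|_{L^2[0,s]}$ yields the stated inequality.

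The main obstacle will be the combinatorial bookkeeping for the tail in (i): I must verify that, summing over every partition of $k$ into $l\geq 2$ parts of size at most $k-1$, the powers of $M$, the Bell factorials, the $L^2$ and $L^\infty$ estimates from induction, and the smallness factors all telescope uniformly in $k$ into the single target $M^{k-4}(k-2)!^{2}$. This is the same style of manipulation carried out in Proposition~\ref{prop4.1} and packaged in the Fa\`a di Bruno variants of Section~\ref{App}; the new ingredient is that each tail term carries an additional factor $\|H^{(l)}\|_{\infty}$, which is absorbed into the constant $C_1^2$ by choosing $M$ sufficiently large compared to the smooth norms of $H$.
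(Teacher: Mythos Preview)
Your overall plan—isolate $H'(\gamma)\gamma^{(k)}$ and bound the multilinear tail—is exactly what the paper does, but the execution differs in one important respect. The paper does \emph{not} work with the Bell-polynomial expansion. It uses the Abadie/Yamanaka form of Fa\`a di Bruno (Corollary~\ref{corA1}),
\[
\Bigl(\tfrac{d}{ds}\Bigr)^{k}H(\gamma)=H'(\gamma)\gamma^{(k)}+\sum_{l=2}^{k}\binom{k}{l}H^{(l)}(\gamma)\Bigl\{\bigl(\tfrac{d}{dh}\bigr)^{k-l}\Bigl[\textstyle\int_0^1\gamma'(s+h\theta)\,d\theta\Bigr]^{l}\Bigr\}_{h=0},
\]
so that the entire tail is a sum of $h$-derivatives of \emph{powers} of the single averaged function $u(h;s)=\int_0^1\gamma'(s+h\theta)\,d\theta$. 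Since $(\tfrac{d}{dh})^{j}u=\int_0^1\theta^{j}\gamma^{(j+1)}(s+h\theta)\,d\theta$, the induction hypothesis on $\|\gamma^{(j+2)}\|_{L^2}$ yields a pointwise Gevrey bound on $(\tfrac{d}{dh})^{j}u$, and Lemma~\ref{lemA3} then gives the bound on $(\tfrac{d}{dh})^{k-l}u^{l}$ in one stroke. This is the same mechanism as in Proposition~\ref{prop4.1}, and it bypasses precisely the ``combinatorial bookkeeping over all partitions'' that you flag as the main obstacle: the partition sum is already packaged inside $(\tfrac{d}{dh})^{k-l}u^{l}$.

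Your Bell-polynomial route can in principle be pushed through, but two points deserve correction. First, your appeal to Lemma~\ref{lemA4} is misplaced: that lemma is tailored to $\eta(s\gamma')$ and is itself phrased in the integral form, not in terms of Bell monomials in $\gamma^{(j)}$; the tools relevant to $H(\gamma)$ are Corollary~\ref{corA1} together with Lemma~\ref{lemA3}. Second, your pick-one-factor-in-$L^2$ scheme forces you to control the initial values $\gamma^{(j)}(0)$ separately and to split each monomial asymmetrically, whereas the paper's integral form treats all factors symmetrically through $u$ and never needs the initial values. What the paper buys is exactly the elimination of the bookkeeping you were worried about; what your route would buy is avoiding the slightly exotic Fa\`a di Bruno variant, at the cost of a longer partition argument.
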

\begin{proof} According to Corollary \ref{corA1}
\[(\frac{d}{ds})^mH(\gamma(s))=H'(\gamma(s))\gamma^{(m)}(s)+\sum_{l=2}^m {m \choose l} H^{(l)}(g(s))\left\{(\frac{d}{dh})^{m-l}[\int_0^1\gamma'(s+h\theta)~d\theta]^l\right\}_{h=0}~.\]
Moreover by the induction hypothesis $u(h;s):= \int_0^1\gamma'(s+h\theta)~d\theta$ satisfies
\[ |(\frac{d}{dh})^j[u(h;s)]| \leq \frac{1}{j+1}\sqrt{s-h}\|\gamma^{(j+2)}\|_{L^2[0,s-h]}\leq \sqrt{s_0-h}M^{j-1}\frac{j!^2}{(j+1)^2}\]
for $0\leq j \leq k-2$. Hence we can use Lemma \ref{lemA3} to conclude as in Proposition \ref{prop4.1} ii. that
part i. of \eqref{eq4.50} holds. Part ii. follows immediately from part i.
\end{proof}

We now are in a position to complete the induction. From \eqref{eq2.50} with $\e=0$ we have
\be \label{eq4.60} 
\begin{aligned}
\hspace{.2in}&2s^2\eta(s\gamma')(\gamma^{(k+1)})^2(s)+\int_0^s(1+o(1))(\gamma^{(k+1)})^2~dt\\
&+4\sum_{l=1}^{k-2}{k\choose l}\int_0^s\gamma^{(k+1)}(t)[t^2\gamma^{(l+2)}+(2l+1)t \gamma^{(l+1)}+l^2 \gamma^{(l)}]
(\frac{d}{dt})^{k-l}\eta(t\gamma')~dt\\
  &+4\int_0^s \gamma^{(k+1)}(t^2 \gamma''+t\gamma')\{ (\frac{d}{dt})^k\eta(t\gamma') 
  +8t\gamma' \eta^2(t\gamma')(t\gamma^{(k+1)}+k\gamma^{(k)})\}~dt\\
  & -32k\int_0^s[t\gamma'( t^2\gamma''+t\gamma') ]\eta^2 \gamma^{(k)}\gamma^{(k+1)}~dt \\ 
&+\int_0^s [4k^2 \eta(t\gamma')+ 4(2k-1)t\frac{d}{dt}\eta(t\gamma')-2nk]\gamma^{(k+1)}\gamma^{(k)}~dt\\
&+4k(k-1)^2 \int_0^s \gamma^{(k+1)} \gamma^{(k-1)}\frac{d}{dt}\eta(t\gamma')~dt 
-\int_0^s (\frac{d}{dt})^k H(\gamma)\gamma^{(k+1)}~dt  =0 .
\end{aligned}
\ee

We now estimate the terms of \eqref{eq4.60} in order of difficulty. 
\begin{lemma}\label{lem4.2} Let $k\geq 2$ and assume the induction hypothesis \eqref{eq4.20} for $j=2,\ldots, k$.  Then
\be \label{eq4.70}
\begin{aligned} &\sum_{l=1}^{k-2}{k\choose l}\int_0^s\gamma^{(k+1)}(t)[t^2\gamma^{(l+2)}+(2l+1)t \gamma^{(l+1)}+l^2 \gamma^{(l)}] (\frac{d}{dt})^{k-l}\eta(t\gamma')~dt\\
 &\leq C \sqrt{s_0} \|\gamma^{(k+1)}\|_{L^2[0,s]} M^{k-2} \frac{k!(k-1)!}{(k+1)^2}~,
 \end{aligned}
\ee
where C is a universal constant.
\end{lemma}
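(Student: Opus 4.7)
The plan is to apply Cauchy--Schwarz to each summand in $l$, bound the two factors of the integrand pointwise using the induction hypothesis \eqref{eq4.20} together with Proposition~\ref{prop4.1}~i, and then carry out the resulting combinatorial sum over $l$.

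First I would rewrite the bracket using the commutator identity $(\frac{d}{dt})^l(t\frac{d}{dt})^p u=(t\frac{d}{dt}+l)^p u^{(l)}$ already used in the derivation of \eqref{eq2.40}, obtaining
\[
t^2\gamma^{(l+2)}+(2l+1)t\gamma^{(l+1)}+l^2\gamma^{(l)}=t\bigl(\tfrac{d}{dt}\bigr)^{l+1}(t\gamma')+l\bigl(\tfrac{d}{dt}\bigr)^{l}(t\gamma').
\]
For $1\le l\le k-2$ both $j=l+1$ and $j=l+2$ lie in the induction range $\{2,\dots,k\}$, so \eqref{eq4.20} supplies the $L^\infty$ bounds $|(\frac{d}{dt})^{l+1}(t\gamma')|\le M^{l-1}\frac{l!^2}{l+1}$ and $|(\frac{d}{dt})^{l}(t\gamma')|\le M^{l-2}\frac{(l-1)!^2}{l}$; hence the bracket is bounded pointwise by $s_0 M^{l-1}\frac{l!^2}{l+1}+M^{l-2}(l-1)!^2$. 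Simultaneously, Proposition~\ref{prop4.1}~i with $n=k-l\in\{2,\dots,k-1\}$ yields $|(\frac{d}{dt})^{k-l}\eta(t\gamma')|\le 16M^{k-l-2}\frac{(k-l-1)!^2}{k-l}$.

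Next I apply Cauchy--Schwarz and the crude bound $\|F\|_{L^2[0,s]}\le\sqrt{s_0}\|F\|_{L^\infty[0,s]}$ to pull $\sqrt{s_0}\,\|\gamma^{(k+1)}\|_{L^2[0,s]}$ outside the sum. Using the algebraic identity $\binom{k}{l}\frac{l!^2(k-l-1)!^2}{(l+1)(k-l)}=k!\,\frac{l!(k-l-1)!}{(l+1)(k-l)^2}$ (and its $(l-1)!$ analogue), the remaining $l$-sum reduces, up to a universal constant and the factor $k!\,M^{k-3}$, to
\[
\sum_{l=1}^{k-2}\biggl(s_0\,\frac{l!(k-l-1)!}{(l+1)(k-l)^2}+\frac{1}{M}\,\frac{(l-1)!(k-l-1)!}{l(k-l)^2}\biggr).
\]

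The last step is a combinatorial estimate: each summand is $U$-shaped in $l$ with minimum near $l=(k-1)/2$, and one checks by a direct ratio computation that the sequence decays geometrically with ratio $O(1/k)$ away from both endpoints. Both sums are therefore \emph{endpoint-dominated} and controlled by a universal constant times their largest term, which is of order $(k-2)!/(k-1)\sim (k-1)!/(k+1)^2$. Reassembling with $k!\,M^{k-3}$ and with the $s_0$ or $1/M$ prefactor, the total fits under $C\sqrt{s_0}\|\gamma^{(k+1)}\|_{L^2[0,s]}M^{k-2}\frac{k!(k-1)!}{(k+1)^2}$, provided $s_0$ is small and $M$ is large relative to the constants coming from $H$. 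The main obstacle is this last bookkeeping: I must verify the geometric decay of successive terms in both weighted sums and confirm that their endpoint values genuinely match the Gevrey-$2$ rate $(k-1)!/(k+1)^2$ up to a universal constant (not, for instance, up to a factor of $k$, which would destroy the induction).
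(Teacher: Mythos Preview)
Your approach is essentially the paper's: rewrite the bracket as $t(\tfrac{d}{dt})^{l+1}(t\gamma')+l(\tfrac{d}{dt})^{l}(t\gamma')$, apply Cauchy--Schwarz, bound the two factors pointwise via \eqref{eq4.20} and Proposition~\ref{prop4.1}~i, and reduce to the same combinatorial sum. The only divergence is in how you close the sum, and here the paper's device is cleaner than your endpoint-domination argument and removes the bookkeeping you flag as the obstacle.

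The paper observes that since $(l+1)+(k-l-1)=k$ with both summands $\ge 1$, one has $(l+1)!(k-l-1)!\le (k-1)!$ (equivalently $\binom{k}{l+1}\ge k$); combined with the partial-fraction identity
\[
\frac{1}{(l+1)(k-l)}=\frac{1}{k+1}\Bigl(\frac{1}{l+1}+\frac{1}{k-l}\Bigr),
\]
this gives
\[
\frac{l!(k-l-1)!}{(l+1)(k-l)^2}=\frac{(l+1)!(k-l-1)!}{(l+1)^2(k-l)^2}\le\frac{(k-1)!}{(k+1)^2}\Bigl(\frac{1}{l+1}+\frac{1}{k-l}\Bigr)^2,
\]
so the $l$-sum is bounded by $\tfrac{4(k-1)!}{(k+1)^2}\sum_{m\ge 1}m^{-2}$, a genuinely universal constant times the Gevrey-$2$ rate, with no appeal to ratio tests or smallness of $s_0$, $1/M$. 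Your endpoint argument is also correct (the ratio $a_{l+1}/a_l\approx l/(k-l)$ is $<1$ for $l<k/2$, and the middle terms are exponentially small by Stirling), but the inequality above gets you there in one line.
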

\begin{proof} In the following we use the induction hypothesis and Proposition \ref{prop4.1} part i.
\be \label{eq4.80}
\begin{aligned}
&\sum_{l=1}^{k-2}{k\choose l}\int_0^s\gamma^{(k+1)}(t)[t^2\gamma^{(l+2)}+(2l+1)t \gamma^{(l+1)}+l^2 \gamma^{(l)}] (\frac{d}{dt})^{k-l}\eta(t\gamma')~dt\\
&=\sum_{l=1}^{k-2}{k\choose l}\int_0^s\gamma^{(k+1)}(t)[t(\frac{d}{dt})^{(l+1)}(t\gamma'(t))+l (\frac{d}{dt})^{(l)}(t\gamma'(t))](\frac{d}{dt})^{k-l}\eta(t\gamma')~dt\\
&\leq \sqrt{s_0} \|\gamma^{(k+1)}\|_{L^2[0,s]} \sum_{l=1}^{k-2} {k \choose l} \{s_0 M^{l-1}\frac{l!^2}{l+1}+M^{l-2}(l-1)!^2\}\cdot 16 M^{k-l-1}\frac{(k-l-1)!^2}{k-l}\\
&=\sqrt{s_0} \|\gamma^{(k+1)}\|_{L^2[0,s]}\cdot 16(s_0+\frac1M)  M^{k-2} k!\sum_{l=1}^{k-2}\frac{(l+1)!(k-l-1)!}{(l+1)^2 (k-l)^2}\\
&\leq 8 \sqrt{s_0} \|\gamma^{(k+1)}\|_{L^2[0,s]} M^{k-2} \frac{k!(k-1)!}{(k+1)^2}
\sum_{l=1}^{k-2}(\frac1{l+1}+\frac1{k-l-1})^2\\
&\leq 32 \sqrt{s_0} \|\gamma^{(k+1)}\|_{L^2[0,s]} M^{k-2} \frac{k!(k-1)!}{(k+1)^2}
\sum_{l=1}^{\infty}\frac1{l^2}\\
&=C \sqrt{s_0} \|\gamma^{(k+1)}\|_{L^2[0,s]} M^{k-2} \frac{k!(k-1)!}{(k+1)^2}
\end{aligned}
\ee
\end{proof}

\begin{lemma}\label{lem4.3} Let $k\geq 2$ and assume the induction hypothesis \eqref{eq4.20} for $j=2,\ldots, k$.  Then
\be \label{eq4.90}
\begin{aligned} &\int_0^s \gamma^{(k+1)}(t^2 \gamma''+t\gamma')\{ (\frac{d}{dt})^k\eta(t\gamma') 
  +8t\gamma' \eta^2(t\gamma')(t\gamma^{(k+1)}+k\gamma^{(k)})\}~dt\\
 &\leq C\sqrt{s_0}  \|\gamma^{(k+1)}\|_{L^2[0,s]}\,  M^{k-4}\frac{k!(k-2)!}{(k-1)^2}~,
 \end{aligned}
 \ee
 where $C=C(H)$.
\end{lemma}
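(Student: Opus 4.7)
\textbf{Proof plan for Lemma \ref{lem4.3}.} The approach combines three ingredients: an algebraic factorization of the prefactor $t^{2}\gamma''+t\gamma'$, the $j=2$ base case of the induction hypothesis \eqref{eq4.20}, and the sharp cancellation recorded in Proposition \ref{prop4.1}(ii). First, the identity $t^{2}\gamma''(t)+t\gamma'(t)=t\,\frac{d}{dt}\bigl(t\gamma'(t)\bigr)$ together with the $j=2$ instance of \eqref{eq4.20}, which gives $\bigl|\frac{d}{dt}(t\gamma'(t))\bigr|\le 1/M$, yields the pointwise bound $|t^{2}\gamma''+t\gamma'|\le t/M$ on $[0,s_{0}]$. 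Second, the bracketed expression in the integrand is precisely the object estimated in Proposition \ref{prop4.1}(ii):
\begin{equation*}
\Bigl|(\tfrac{d}{dt})^{k}\eta(t\gamma')+8t\gamma'\,\eta^{2}(t\gamma')\bigl(t\gamma^{(k+1)}+k\gamma^{(k)}\bigr)\Bigr|\le C\,M^{k-4}\,\frac{k!(k-2)!}{(k-1)^{2}}.
\end{equation*}

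Multiplying these two pointwise bounds and applying Cauchy--Schwarz to the remaining factor $\gamma^{(k+1)}$ gives
\begin{equation*}
\begin{aligned}
\int_{0}^{s}\bigl|\gamma^{(k+1)}\bigr|\,\bigl|t^{2}\gamma''+t\gamma'\bigr|\,\bigl|\{\cdots\}\bigr|\,dt
&\le \frac{C\,M^{k-4}}{M}\,\frac{k!(k-2)!}{(k-1)^{2}}\int_{0}^{s}t\,\bigl|\gamma^{(k+1)}(t)\bigr|\,dt\\
&\le \frac{C\,M^{k-4}}{M}\cdot\frac{s^{3/2}}{\sqrt{3}}\cdot\frac{k!(k-2)!}{(k-1)^{2}}\,\|\gamma^{(k+1)}\|_{L^{2}[0,s]}.
\end{aligned}
\end{equation*}
Bounding $s^{3/2}\le s_{0}\sqrt{s_{0}}$ and absorbing the benign factor $s_{0}/(M\sqrt{3})$, which depends only on $H$, into a new constant $C=C(H)$ produces the claimed estimate.

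The main difficulty of this block of the induction is actually upstream of Lemma \ref{lem4.3}, inside Proposition \ref{prop4.1}(ii). A naive estimate of $(\frac{d}{dt})^{k}\eta(t\gamma')$ using only Proposition \ref{prop4.1}(i) would give a factor of order $M^{k-2}(k-1)!^{2}/k$, which is too large by a factor $\sim M^{2}k$ to close \eqref{eq4.20}. The cancellation encoded in Proposition \ref{prop4.1}(ii)---subtracting $8t\gamma'\eta^{2}\bigl(t\gamma^{(k+1)}+k\gamma^{(k)}\bigr)$---is engineered precisely to remove the single Fa\`a di Bruno summand that carries the not-yet-controlled derivative $\gamma^{(k+1)}$. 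Once that cancellation is available, the present lemma reduces to the routine Cauchy--Schwarz computation above, and the extra smallness comes from the factor $t/M$ supplied by the prefactor $t^{2}\gamma''+t\gamma'$.
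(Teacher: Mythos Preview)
Your proof is correct and follows the same route as the paper: bound the prefactor $t^{2}\gamma''+t\gamma'=t\,\tfrac{d}{dt}(t\gamma')$ using the $j=2$ base case of \eqref{eq4.20}, invoke Proposition~\ref{prop4.1}(ii) for the bracketed term, and finish with Cauchy--Schwarz. The paper's own proof is the one-line inequality \eqref{eq4.100}, which implicitly does exactly this (using $\int_0^s|\gamma^{(k+1)}|\,dt\le\sqrt{s_0}\,\|\gamma^{(k+1)}\|_{L^2}$ and absorbing the bound on $|t^{2}\gamma''+t\gamma'|$ into $C$); your version simply makes each step explicit and in fact extracts one extra power of $s_0/M$ before absorbing it.
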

\begin{proof} In the following we use the induction hypothesis and Proposition \ref{prop4.1} part ii.
\be \label{eq4.100}
\begin{aligned} 
&\int_0^s \gamma^{(k+1)}(t^2 \gamma''+t\gamma')\{ (\frac{d}{dt})^k\eta(t\gamma') 
  +8t\gamma' \eta^2(t\gamma')(t\gamma^{(k+1)}+k\gamma^{(k)})\}~dt\\
&\leq  \|\gamma^{(k+1)}\|_{L^2[0,s]}\, \sqrt{s_0}  \cdot C M^{k-4}\frac{k!(k-2)!}{(k-1)^2}\\
&=C\sqrt{s_0}  \|\gamma^{(k+1)}\|_{L^2[0,s]}\,  M^{k-4}\frac{k!(k-2)!}{(k-1)^2}\\
\end{aligned}
\ee
\end{proof}

Finally we estimate the remaining terms in \eqref{eq4.60}.
\begin{lemma}\label{lem4.4} Let $k\geq 2$ and assume the induction hypothesis \eqref{eq4.20} for $j=2,\ldots, k$.  Then for a constant $C=C(H)$,
\be \label{eq4.110}
\begin{aligned} &| -32k \int_0^s[t\gamma'( t^2\gamma''+t\gamma') ]\eta^2 \gamma^{(k)}\gamma^{(k+1)}~dt \\
&+\int_0^s [4k^2 \eta(t\gamma')+ 4(2k-1)t\frac{d}{dt}\eta(t\gamma')-2nk]\gamma^{(k+1)}\gamma^{(k)}~dt\\
&+4k(k-1)^2 \int_0^s \gamma^{(k+1)} \gamma^{(k-1)}\frac{d}{dt}\eta(t\gamma')~dt\, | \\
&\leq C  \|\gamma^{(k+1)}\|_{L^2[0,s]}( k^2\|\gamma^{(k)}\|_{L^2[0,s]}+ k^3\|\gamma^{(k-1)}\|_{L^2[0,s]})\\
& \leq C  \|\gamma^{(k+1)}\|_{L^2[0,s]}( M^{k-3}\frac{k^2 (k-2)!^2}{k-1}+ 
M^{k-4}\frac {k^3 (k-3)!^2}{k-2})\\
&=\frac{C}{M}\{ M^{k-2}\frac{ (k-1)!^2}{k} \|\gamma^{(k+1)}\|_{L^2[0,s]}\}~.
\end{aligned}
\ee
\end{lemma}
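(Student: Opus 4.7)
The plan is to estimate each of the three integrals on the left-hand side separately by Cauchy--Schwarz, using that the coefficients multiplying $\gamma^{(k+1)}\gamma^{(k)}$ (respectively $\gamma^{(k+1)}\gamma^{(k-1)}$) are uniformly bounded on $[0,s_0]$, and then apply the induction hypothesis \eqref{eq4.20} to $\|\gamma^{(k)}\|_{L^2[0,s]}$ and $\|\gamma^{(k-1)}\|_{L^2[0,s]}$.

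First I would verify the pointwise bounds on the prefactors. The $j=2$ case of the induction hypothesis (together with the smallness achieved by rescaling at the start of section \ref{sec4}) ensures $|\frac{d}{dt}(t\gamma'(t))| = |t\gamma''(t)+\gamma'(t)|$ is uniformly bounded (in fact small) on $[0,s_0]$, so $|t\gamma'(t)| \leq C t$ and $|t^2\gamma''(t)+t\gamma'(t)| \leq C t$. Since $\eta(t\gamma')\leq 1$ and $\eta^2\leq 1$, the prefactor in the first integral is dominated by $C t^2 \leq C s_0^2$, and (using the $n=1$ case of Proposition \ref{prop4.1} part i, which depends only on the base case) $|\frac{d}{dt}\eta(t\gamma')|\leq C$, so the prefactors in the second and third integrals are also bounded uniformly, with an additional factor of $t$ in the middle term of the second integral. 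Applying Cauchy--Schwarz term by term then yields
\[
\text{(first)} \leq C k\,\|\gamma^{(k)}\|_{L^2}\|\gamma^{(k+1)}\|_{L^2},\quad \text{(second)} \leq C k^2\,\|\gamma^{(k)}\|_{L^2}\|\gamma^{(k+1)}\|_{L^2},
\]
\[
\text{(third)}\leq C k^3\,\|\gamma^{(k-1)}\|_{L^2}\|\gamma^{(k+1)}\|_{L^2},
\]
where all norms are over $[0,s]$. Summing gives the first inequality in \eqref{eq4.110}.

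Finally I would substitute the induction hypothesis $\|\gamma^{(k)}\|_{L^2[0,s]}\leq M^{k-3}(k-2)!^2/(k-1)$ and $\|\gamma^{(k-1)}\|_{L^2[0,s]}\leq M^{k-4}(k-3)!^2/(k-2)$ to obtain the second inequality. To reach the last form, I use $(k-1)! = (k-1)(k-2)!$ in both terms and compute:
\[
M^{k-3}\frac{k^2(k-2)!^2}{k-1} = \frac{1}{M}\,M^{k-2}\frac{(k-1)!^2}{k}\cdot \frac{k^3}{(k-1)^3},
\]
\[
M^{k-4}\frac{k^3(k-3)!^2}{k-2} = \frac{1}{M}\,M^{k-2}\frac{(k-1)!^2}{k}\cdot \frac{k^4}{(k-1)^2(k-2)^3}.
\]
Both polynomial ratios in $k$ are bounded (tending to constants as $k\to\infty$), so the sum is absorbed into a single constant times $\frac{1}{M}M^{k-2}(k-1)!^2/k$. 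No serious obstacle is expected; the step is essentially bookkeeping, but the key point is that the estimate comes with an extra factor of $1/M$, which is precisely what allows this contribution to be absorbed when closing the induction for $M$ chosen sufficiently large in Theorem \ref{th4.1}.
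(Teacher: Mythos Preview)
Your proof is correct and follows exactly the approach the paper leaves implicit in the chain of inequalities: bound the prefactors pointwise using the $j=2$ case of the induction hypothesis and Proposition~\ref{prop4.1}(i), apply Cauchy--Schwarz, then substitute \eqref{eq4.20}. One minor slip: in your second displayed identity the left side carries $M^{k-4}=\tfrac{1}{M^2}M^{k-2}$, not $\tfrac{1}{M}M^{k-2}$, but this only makes that term smaller, so the final bound $\tfrac{C}{M}M^{k-2}(k-1)!^2/k$ is unaffected.
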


To complete the proof of Theorem \ref{th4.1} we need to show 
\be \label{eq4.120}\|\gamma^{(k+1)}\|_{L^2[0,s]}+|(\frac{d}{ds})^{k}(s\gamma'(s))|\leq  M^{k-2}\frac{(k-1)!^2}{(k)},\, \text{on $[0,s_0]$}.
\ee
Using Proposition \ref{prop4.1} and Lemmas \ref{lem4.1}, \ref{lem4.2}, \ref{lem4.3} and \ref{lem4.4} to estimate the error terms in \eqref{eq4.60}, we find
\be \label{eq4.130}
\begin{aligned} &\|\gamma^{(k+1)}\|_{L^2[0,s]}+|(\frac{d}{ds})^{k}(s\gamma'(s))|\\
&\leq C\sqrt{s_0}\{  M^{k-2} \frac{k!(k-1)!}{(k+1)^2}+  M^{k-4}\frac{k!(k-2)!}{(k-1)^2}\} \\
&+\frac{C}{M} M^{k-2}\frac{ (k-1)!^2}{k}+ C_2 M^{k-3}(k-2)!^2 \\
&\leq M^{k-2}\frac{ (k-1)!^2}{k}\{C\sqrt{s_0}+ C\sqrt{s_0}\frac{1}{M^2}\}\\
&\leq C\sqrt{s_0} M^{k-2}\frac{ (k-1)!^2}{k}\leq  M^{k-2}\frac{ (k-1)!^2}{k}
\end{aligned}
\ee
for $s_0$ small and $M$ large. This completes the proof of Theorem \ref{th4.1}.

\section{Appendix: Fa\`{a} di Bruno formulas}
\label{App}
\setcounter{equation}{0}

In this section we recall the Fa\`{a} di Bruno formulas for the higher derivatives of composite functions of one variable. An excellent reference is the interesting and informative survey article of W. P. Johnson \cite{J}.
The usual version of the formula is
\be \label{eq7.10}
(\frac{d}{ds})^m f(g(s))=\sum_{l=1}^m \sum_{b\in A_{m,l}} \frac{m!}{b_1! b_2!\cdots b_m!}f^{(l)}(g(s)(\frac{g'(s)}{1!})^{b_1}(\frac{g''(s)}{2!})^{b_2}
\cdots (\frac{g^{(m)}(s)}{m!})^{b_m}
\ee
where the  second summation is extended over all admissible partitions
\[A_{m,l}=\{b=(b_1,\ldots, b_m)\in \bfN^m: 
 b_1+b_2+\ldots +b_{m}=l,\,\, b_1+2b_2+\ldots+
mb_{m}=m\}~.\]


An alternate  version (attributed by Johnson to J. F. C. Tiburce Abadie) is 
\be \label{eq7.14}
(\frac{d}{ds})^m f(g(s))=\sum_{l=1}^m {m \choose l} f^{(l)}(g(s))\left\{(\frac{d}{dh})^{m-l} (\frac{g(s+h)-g(s)}h)^l\right\}_{h=0}
\ee

The following corollary was rediscovered by Yamanaka \cite{Y}.
\begin{corollary} \label{corA1} \be \begin{aligned}
&(\frac{d}{ds})^m f(g(s))=\sum_{l=1}^m {m \choose l} f^{(l)}(g(s))\left\{(\frac{d}{dh})^{m-l}[\int_0^1g'(s+h\theta)~d\theta]^l\right\}_{h=0}\\
&=f'(g(s))g^{(m)}(s)+\sum_{l=2}^m {m \choose l} f^{(l)}(g(s))\left\{(\frac{d}{dh})^{m-l}[\int_0^1g'(s+h\theta)~d\theta]^l\right\}_{h=0}\\
\end{aligned}
\ee
\end{corollary}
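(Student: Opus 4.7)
The plan is to derive the corollary directly from the Tiburce Abadie version \eqref{eq7.14} by a simple substitution, then isolate the $l=1$ term by a direct calculation.

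First I would observe that the fundamental theorem of calculus, together with the linear change of variables $u = s + h\theta$, gives
\[
g(s+h) - g(s) = \int_s^{s+h} g'(u)\, du = h \int_0^1 g'(s + h\theta)\, d\theta,
\]
so that $\frac{g(s+h)-g(s)}{h} = \int_0^1 g'(s+h\theta)\, d\theta$ as an identity of smooth functions of $h$ (including at $h=0$, where both sides equal $g'(s)$). Substituting this identity into \eqref{eq7.14} immediately yields the first equality of the corollary.

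For the second equality I would separate the $l=1$ term from the sum. Differentiating under the integral sign,
\[
\left(\frac{d}{dh}\right)^{m-1}\!\int_0^1 g'(s+h\theta)\, d\theta = \int_0^1 \theta^{m-1} g^{(m)}(s+h\theta)\, d\theta,
\]
which at $h=0$ evaluates to $g^{(m)}(s) \int_0^1 \theta^{m-1}\, d\theta = \frac{1}{m} g^{(m)}(s)$. Multiplying by $\binom{m}{1} = m$ and $f'(g(s))$ produces exactly $f'(g(s)) g^{(m)}(s)$, and pulling this term out of the sum gives the second displayed equation.

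There is no real obstacle here: the only subtle point is verifying that $\int_0^1 g'(s+h\theta)\, d\theta$ is smooth in $h$ at $0$ (it is, since $g$ is assumed smooth and differentiation passes through the compact integral), which justifies both the substitution into \eqref{eq7.14} and the interchange of $\frac{d}{dh}$ with the integral. The entire argument is thus a two-line reduction to the Tiburce Abadie formula already recorded in the appendix.
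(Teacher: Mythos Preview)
Your proof is correct and is exactly the derivation the paper has in mind: the corollary is stated immediately after the Abadie formula \eqref{eq7.14} (and attributed to Yamanaka), with the implicit step being the substitution $\frac{g(s+h)-g(s)}{h}=\int_0^1 g'(s+h\theta)\,d\theta$ and the extraction of the $l=1$ term, precisely as you carry out.
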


We apply these formulas to our situation.
Let $\eta(x)=\frac1{1+4x^2}$, then
\be \label{eq20}
(\frac{d}{dx})^p \eta(x)=(-2)^p p! \Lambda_{p+1}(x),\, \,\Lambda_{p+1}(x):=\frac{  \sin{ [(p+1)\arcsin{\frac1{\sqrt{1+4x^2}}  }]}    }
{(1+4x^2)^{\frac{p+1}2}}
\ee
Hence also,
\be \label{eq30}
(\frac{d}{dx})^{p+1}\arctan{2x}=2(\frac{d}{dx})^p \eta(x)=(-1)^p 2^{p+1}p!\Lambda_{p+1}(x)
\ee

Therefore,
\be \label{eq35}
\begin{aligned}
&(\frac{d}{ds})^{m}\eta(s\gamma')=\sum_{l=1}^m\sum_{b\in A_{m,l}} \frac{m!}{b_1! b_2!\cdots b_m!}(-2)^l l! \Lambda_{l+1}(s\gamma') 
\cdot (\frac{s\gamma''+\gamma'}{1!})^{b_1}\cdots  (\frac{s\gamma^{(m+1)}+m\gamma^{(m)}}{m!})^{b_m}\\
&=-8s\gamma'\eta^2(s\gamma^{(m+1)}(s)+m\gamma^{(m)}(s))
+\sum_{l=2}^m\sum_{b\in A_{m,l}} \frac{m!}{b_1! b_2!\cdots b_{m-1}!}(-2)^l l! \Lambda_{l+1}(s\gamma') \\
&\cdot (\frac{s\gamma''+\gamma'}{1!})^{b_1}\cdots  (\frac{s\gamma^{(m)}+(m-1)\gamma^{(m-1)}}{(m-1)!})
^{b_{m-1}}~.
 \end{aligned}
\ee

\be \label{eq70}
\begin{aligned}
 (\frac{d}{ds})^m \arctan{(2s\gamma'(s))}&=\sum_{l=1}^m \sum _{b\in A_{m,l}}\frac{m!}{b_1! b_2!\cdots b_m!}(-1)^{l-1}2^l (l-1)! \Lambda_{l}(s\gamma')\\ 
&\cdot (\frac{s\gamma''+\gamma'}{1!})^{b_1}\cdots  (\frac{s\gamma^{(m+1)}+m\gamma^{(m)}}{m!})^{b_m}\\
 \end{aligned}
\ee

\be \label{eq40}
\begin{aligned}
\hspace{.1in} &(\frac{d}{ds})^k (s\frac{d}{ds})\arctan{(2s\gamma'(s)}(0)=(s\frac{d}{ds}+k)(\frac{d}{ds})^k \arctan{(2s\gamma'(s)}(0)\\
&=k(\frac{d}{ds})^k \arctan{(2s\gamma'(s)}(0)\\
&=k\sum_{l=1}^k  (-1)^{l-1}2^l (l-1)! \sum_{b\in A_{k,l}} \frac{k!}{b_1! b_2!\cdots b_k!}  
\cdot (\frac{\gamma'(0)}{1!})^{b_1}\cdots  (\frac{\gamma^{(k)}(0)}{(k-1)!})^{b_k}\\
 \end{aligned}
\ee

Similarly,
\be \label{eq2.15}
(\frac{d}{ds})^k H(\gamma)(0) =\sum_{l=1}^k H^{(l)}(0)\sum_{b \in A_{k,l}} \frac{k!}{b_1!\cdots b_k!}\\
\cdot (\frac{\gamma'(0)}{1!})^{b_1}(\frac{\gamma''(0)}{2!})^{b_2}\cdots (\frac{\gamma^{(k)}(0)}{k!})^{b_k}~.
\ee



\begin{lemma}\label{lemA2} 
\be \label{eqA40}
\begin{aligned}
(\frac{d}{ds})^n\eta(s\gamma')&=\sum_{l=1}^n {n\choose l}(-2)^l l! \Lambda_{l+1}(s\gamma'(s))\left\{ (\frac{d}{dh})^{n-l}[\int_0^1 \gamma''(s+h\theta)~d\theta +\gamma'(s+h)]^l\right\}_{h=0}\\
&=-8s\gamma'(s)\eta^2(s\gamma'(s))(\frac{d}{ds})^n[s\gamma']\\
&+ \sum_{l=2}^n {n\choose l}(-2)^l l! \Lambda_{l+1}(s\gamma'(s))\left\{ (\frac{d}{dh})^{n-l}[\int_0^1 \gamma''(s+h\theta)~d\theta +\gamma'(s+h)]^l\right\}_{h=0}.
\end{aligned}
\ee
\end{lemma}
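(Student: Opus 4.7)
The plan is to invoke Corollary \ref{corA1} directly with $f(x) = \eta(x)$ and $g(s) = s\gamma'(s)$, since the lemma is essentially a repackaging of that corollary for this particular composition, with the $l=1$ term exhibited separately.

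First I would rewrite the inner difference quotient in a form matching the statement. A short calculation starting from $\frac{(s+h)\gamma'(s+h) - s\gamma'(s)}{h}$ and using the fundamental theorem of calculus identity $\gamma'(s+h) - \gamma'(s) = h\int_0^1\gamma''(s+h\theta)\,d\theta$ expresses $\frac{g(s+h)-g(s)}{h}$ as a combination of $\int_0^1\gamma''(s+h\theta)\,d\theta$ and $\gamma'(s+h)$, matching the bracketed expression in the lemma. Substituting the explicit formula $\eta^{(l)}(x) = (-2)^l l!\,\Lambda_{l+1}(x)$ from \eqref{eq20} then yields the first displayed identity.

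Next I would isolate the $l=1$ term in order to derive the second identity. The key observation is that $(\frac{d}{dh})^{n-1}[\frac{g(s+h)-g(s)}{h}]\big|_{h=0}$ equals $\frac{g^{(n)}(s)}{n}$, which is read off immediately from the Taylor expansion $\frac{g(s+h)-g(s)}{h} = \sum_{k\geq 0}\frac{g^{(k+1)}(s)}{(k+1)!}h^k$. Combined with the identity $\Lambda_2(x) = 4x\,\eta^2(x)$ — which follows from $\sin(2\arcsin y) = 2y\sqrt{1-y^2}$ applied with $y = 1/\sqrt{1+4x^2}$ in the definition of $\Lambda_{p+1}$ recorded in \eqref{eq20} — the $l=1$ term collapses, after using ${n \choose 1}\cdot \frac{1}{n}=1$, to exactly $-8s\gamma'(s)\eta^2(s\gamma'(s))(\frac{d}{ds})^n[s\gamma'(s)]$, as required.

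There is no real obstacle here; the content of the lemma is entirely algebraic. The reason the $l=1$ term is singled out, rather than left absorbed in the sum, is that it is the unique contribution in which $(\frac{d}{ds})^n[s\gamma']$ (and hence the top-order derivative $\gamma^{(n+1)}$) appears linearly — a structural point that is then exploited in Proposition \ref{prop4.1} (ii) to separate the leading-order behavior from controllable lower-order terms.
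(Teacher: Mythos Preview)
Your proposal is correct and follows essentially the same route as the paper: apply Corollary~\ref{corA1} with $f=\eta$, $g(s)=s\gamma'(s)$ together with \eqref{eq20}, and then simplify $\frac{g(s+h)-g(s)}{h}=\int_0^1 g'(s+h\theta)\,d\theta$ to $s\int_0^1\gamma''(s+h\theta)\,d\theta+\gamma'(s+h)$. The only cosmetic difference is that the paper reaches this expression via an integration by parts on $\int_0^1 g'(s+h\theta)\,d\theta$, whereas you expand the difference quotient directly; and the paper reads off the $l=1$ term from the second line of Corollary~\ref{corA1} (using $\eta'(x)=-8x\eta^2(x)$) rather than recomputing $\Lambda_2$ explicitly.
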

\begin{proof} By Corollary \ref{corA1} and \eqref{eq20} it suffices to calculate $\int_0^1 g'(s+h\theta)~d\theta$ for $g(s)=s\gamma'(s),\, g'(s)=\gamma'(s)+s\gamma''(s)$, Hence
\[\int_0^1 g'(s+h\theta)~\theta=\int_0^1\{(s+h\theta)\gamma''(s+h\theta)+\gamma'(s+h\theta)\}~d\theta
=\int_0^1s\gamma''(s+h\theta)~d\theta+\gamma'(s+h)\]
after  an integration by parts.
\end{proof}

\begin{lemma}\label{lemA3}  Assume  $|(\frac{d}{ds})^j u(s)|\leq M^{j-1} \frac{j!^2}{(j+1)^2}$ on $[0,s_0]$ for $0\leq j \leq m$. Then 
\be \label{eqA50} |(\frac{d}{ds})^{j}(u(s))^{l} |\leq (\frac{C}{M})^{l-1} M^{j-1} \frac{j!^2}{(j+1)^2} \,\,
 \text{on} \,\,[0,s_0]~
 \ee
 for $0\leq j \leq m $ where $l$ is a positive integer and C is a universal constant.
\end{lemma}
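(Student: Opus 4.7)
I would proceed by induction on $l$. The base case $l = 1$ is exactly the hypothesis on $u$, together with the convention $(C/M)^0 = 1$. For $l \geq 2$, the plan is to apply the Leibniz rule to the factorization $u^l = u\cdot u^{l-1}$:
\[
\Bigl|\bigl(\tfrac{d}{ds}\bigr)^{j}(u^{l})\Bigr| \;\leq\; \sum_{i=0}^{j} \binom{j}{i}\, |u^{(i)}|\, |(u^{l-1})^{(j-i)}|,
\]
and then substitute the hypothesis for $u^{(i)}$ and the inductive bound for $(u^{l-1})^{(j-i)}$. After collecting the powers of $M$ this produces the upper bound
\[
\Bigl(\frac{C}{M}\Bigr)^{l-2} M^{j-2} \sum_{i=0}^{j} \binom{j}{i}\, \frac{i!^{2}\,(j-i)!^{2}}{(i+1)^{2}\,(j-i+1)^{2}}.
\]
The induction closes provided the combinatorial sum is bounded by $C\, j!^{2}/(j+1)^{2}$ for the \emph{same} universal constant $C$, since then $(C/M)^{l-2} M^{j-2}\cdot C = (C/M)^{l-1} M^{j-1}$ exactly.

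The heart of the argument is therefore the estimate
\[
S_{j} \;:=\; \sum_{i=0}^{j} \frac{i!\,(j-i)!}{(i+1)^{2}\,(j-i+1)^{2}} \;\leq\; C\,\frac{j!}{(j+1)^{2}},
\]
where I have used $\binom{j}{i}\, i!\,(j-i)! = j!$ to absorb the binomial coefficient. The two endpoints $i = 0$ and $i = j$ each contribute exactly $j!/(j+1)^{2}$ and supply the required main term. For the interior indices $1 \leq i \leq j-1$, I would use $\binom{j}{i} \geq j$ to get $i!(j-i)! \leq (j-1)!$, together with the partial-fraction identity
\[
\frac{1}{(i+1)(j-i+1)} \;=\; \frac{1}{j+2}\Bigl(\frac{1}{i+1} + \frac{1}{j-i+1}\Bigr),
\]
which, squared and summed, yields $\sum_{i=0}^{j}\frac{1}{(i+1)^{2}(j-i+1)^{2}} \leq 8\zeta(2)/(j+2)^{2}$. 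The interior contribution to $S_{j}$ is thus $O(j!/j^{3})$ and is absorbed into the main term from the endpoints.

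There is essentially no conceptual obstacle, and only one piece of delicate bookkeeping: the universal constant that emerges from the bound on $S_{j}$ must be reconciled with the constant $C$ appearing in the statement, so that the bootstrap is self-consistent. Because the target inequality has the rigid algebraic form $C\cdot(C/M)^{l-2}M^{j-2} \leq (C/M)^{l-1}M^{j-1}$, the ratio is exactly $1$ and no slack is lost per inductive step; choosing $C$ at the very start to be the absolute constant produced by the harmonic-type sum makes the whole argument close uniformly in $l$ and $j$.
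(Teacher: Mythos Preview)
Your proposal is correct and follows essentially the same route as the paper: induction on $l$, Leibniz applied to $u^{l}=u\cdot u^{l-1}$, and the partial-fraction identity $\frac{1}{(i+1)(j-i+1)}=\frac{1}{j+2}\bigl(\frac{1}{i+1}+\frac{1}{j-i+1}\bigr)$ to control the resulting combinatorial sum, with $C$ chosen at the outset so the bootstrap closes with no loss per step. The only cosmetic difference is that the paper bounds $i!(j-i)!\le j!$ uniformly over all $i$ rather than separating the endpoint terms $i=0,j$ from the interior as you do; both lead to the same universal constant of size $4\sum_{k\ge 1}k^{-2}$.
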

\begin{proof} We use induction on $l$. The case $l=1$ is trivial.  Set $u_l(s)=u^l(s)$ and suppose that for \eqref{eqA50} holds for some positive integer $l$.
Then
\be \begin{aligned}
&|(\frac{d}{ds})^{j}u_{l+1}(s)|=|\sum_{i=0}^j {j\choose i}(\frac{d}{ds})^i u_l(s) (\frac{d}{ds})^{j-i}u(s)|\\
&\leq \sum_{i=0}^j {j\choose i}(\frac{C}{M})^{l-1} M^{i-1} \frac{i!^2}{(i+1)^2} M^{j-i-1}\frac{(j-i)!^2}{(j-i+1)^2}\\
&\leq  (\frac{C}{M})^{l-1} M^{j-2} m!\sum_{i=0}^j i! (j-i)! \frac1{(j+2)^2}(\frac1{i+1}+\frac1{j-i+1})^2\\
&\leq (\frac{C}{M})^{l} M^{j-1}\frac{j!^2}{(j+1)^2} \frac4{C}\sum_{i=0}^{\infty}\frac1{i^2}
\leq (\frac{C}{M})^{l} M^{j-1} \frac{j!^2}{(j+1)^2} ~,\\
&\text{for $C=4\sum_{i=0}^{\infty}\frac1{i^2}$}~.
\end{aligned} \ee
\end{proof}

\begin{lemma} \label{lemA4} Assume that 
\be \label{eqA60}
|(\frac{d}{ds})^{j+1}(s\gamma'(s))|\leq M^{j-1}\frac{j!^2}{j+1}
\ee
on $[0,s_0]$ for $j\geq 1$ and define
\[u(h;s)=\int_0^1 s\gamma''(s+h\theta)~d\theta+\gamma'(s+h),\, \, u_l(h;s)=(u(h;s))^l~.\]
Then  \be \label{eqA65}
|(\frac{d}{dh})^j u_l(h;s)|_{h=0}|\leq (\frac{C}{M})^{l-1}M^{j-1}\frac{j!^2}{(j+1)^2}~.
\ee
\end{lemma}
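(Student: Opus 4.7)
The plan is a short induction on $l$: the base case $l=1$ comes from an explicit identity relating $(\frac{d}{dh})^j u(h;s)|_{h=0}$ to $(\frac{d}{ds})^{j+1}[s\gamma'(s)]$, and the inductive step is a direct transcription of the Leibniz-rule argument already used in the proof of Lemma \ref{lemA3}.

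For the base case, I would differentiate $u(h;s)=\int_0^1 s\gamma''(s+h\theta)\,d\theta+\gamma'(s+h)$ under the integral. Since $(\frac{d}{dh})^j\gamma''(s+h\theta)=\theta^j\gamma^{(j+2)}(s+h\theta)$ and $\int_0^1\theta^j\,d\theta=\frac{1}{j+1}$, evaluating at $h=0$ gives
\[\left(\frac{d}{dh}\right)^j u(h;s)\bigg|_{h=0}=\gamma^{(j+1)}(s)+\frac{s\,\gamma^{(j+2)}(s)}{j+1}.\]
The product-rule identity $(\frac{d}{ds})^{j+1}[s\gamma'(s)]=(j+1)\gamma^{(j+1)}(s)+s\gamma^{(j+2)}(s)$ then shows
\[\left(\frac{d}{dh}\right)^j u(h;s)\bigg|_{h=0}=\frac{1}{j+1}\left(\frac{d}{ds}\right)^{j+1}[s\gamma'(s)],\]
so the hypothesis \eqref{eqA60} immediately yields $|(\frac{d}{dh})^j u(h;s)|_{h=0}|\leq M^{j-1}\frac{j!^2}{(j+1)^2}$, which is exactly \eqref{eqA65} at $l=1$ with the constant $(C/M)^0=1$.

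For the inductive step, I would assume \eqref{eqA65} for some $l\geq 1$ and write $u_{l+1}(h;s)=u(h;s)\,u_l(h;s)$. The Leibniz rule gives
\[\left(\frac{d}{dh}\right)^j u_{l+1}(h;s)\bigg|_{h=0}=\sum_{i=0}^j\binom{j}{i}\left(\frac{d}{dh}\right)^{i} u_l(h;s)\bigg|_{h=0}\cdot\left(\frac{d}{dh}\right)^{j-i} u(h;s)\bigg|_{h=0}.\]
Bounding the first factor by the inductive hypothesis and the second by the $l=1$ estimate produces
\[\left(\frac{C}{M}\right)^{l-1}M^{j-2}\,j!\sum_{i=0}^j\frac{i!\,(j-i)!}{(i+1)^2(j-i+1)^2}.\]
This sum has exactly the form handled in the proof of Lemma \ref{lemA3}: the partial-fraction identity $\frac{1}{(i+1)(j-i+1)}=\frac{1}{j+2}\bigl(\frac{1}{i+1}+\frac{1}{j-i+1}\bigr)$, the crude estimate $i!(j-i)!\leq j!$, and the convergence of $\sum i^{-2}$ together collapse it to a multiple of $\frac{j!}{(j+1)^2}$. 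After absorbing the resulting universal constant into $C$, I obtain the bound $(\frac{C}{M})^l M^{j-1}\frac{j!^2}{(j+1)^2}$, closing the induction.

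The only non-routine ingredient is the base-case identity converting the hypothesis on $(\frac{d}{ds})^{j+1}[s\gamma'(s)]$ into the required bound on $(\frac{d}{dh})^j u$; once this is in hand, the inductive step is a verbatim repetition of the combinatorial manipulation from Lemma \ref{lemA3}, so no genuinely new difficulty arises.
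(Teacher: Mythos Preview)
Your proposal is correct and follows essentially the same approach as the paper: establish the $l=1$ bound from the identity linking $(\frac{d}{dh})^j u$ to $(\frac{d}{ds})^{j+1}[s\gamma'(s)]$, then pass to general $l$ via the Leibniz-rule induction of Lemma~\ref{lemA3}. The only cosmetic difference is that the paper carries out an integration by parts to obtain $(\frac{d}{dh})^j u(h;s)=\int_0^1\theta^j(\frac{d}{ds})^{j+1}[s\gamma'(s)](s+h\theta)\,d\theta$ for \emph{all} $h$, which lets it invoke Lemma~\ref{lemA3} as a black box, whereas you evaluate directly at $h=0$ and must therefore re-run the induction; both routes arrive at the same estimate.
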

\begin{proof} Note that 
\be \label{eqA70}
\begin{aligned}
\hspace{.2in} &(\frac{d}{dh})^j u(h;s)=\int_0^1 s\theta^j \gamma^{(j+2)}(s+h\theta)~d\theta+\gamma^{(j+1)}(s+h)\\
&=\int_0^1 \theta^j (s+h\theta)\gamma^{(j+2)}(s+h\theta)~d\theta-\int_0^1 \theta^{j+1}\frac{d}{d\theta}[\gamma^{(j+1)}(s+h\theta)]~d\theta+\gamma^{(j+1)}(s+h)\\
&=\int_0^1\theta^j\{(s+h\theta)\gamma^{(j+2)}+(j+1)\gamma^{(j+1)}(s+h\theta)\}~d\theta\\
&=\int_0^1\theta^j (\frac{d}{ds})^{j+1}[s\gamma'(s)]( s+h\theta)~d\theta~.
\end{aligned}
\ee
Therefore \eqref{eqA70} and assumption \eqref{eqA60} imply
\[|(\frac{d}{dh})^j u(h;s)|\leq M^{j-1}\frac{j!^2}{j+1}\int_0^1 \theta^j~d\theta=M^{j-1}\frac{j!^2}{(j+1)^2}~.\]
Hence we can apply Lemma \ref{lemA3} to conclude \eqref{eqA65}.

\end{proof}

\bigskip

\end{document}